\documentclass[a4paper,12pt,oneside]{amsart}       

\usepackage[british,english]{babel} 

\theoremstyle{cited}
\newtheorem{teor}{Theorem}[section]
\newtheorem{lem}[teor]{Lemma}
\newtheorem{cor}[teor]{Corollary}
\newtheorem{prop}[teor]{Proposition}
\newtheorem{conj}[teor]{Conjecture}

\theoremstyle{definition}
\newtheorem{deft}[teor]{Definition}

\theoremstyle{remark}
\newtheorem{oss}[teor]{Remark}

\newcommand{\C}{\mathbb{C}}
\newcommand{\R}{\mathbb{R}}

\newcommand{\N}{\mathbb{N}}
\newcommand{\DD}{\boldsymbol\Delta}
\newcommand{\vol}{\textup{Vol}}

\title[Volume rigidity at ideal points]{Volume rigidity at ideal points of the character variety of hyperbolic
  $3$-manifolds}

\author{Stefano Francaviglia and Alessio Savini}

\begin{document}

\maketitle

\begin{abstract}
Given the fundamental group $\Gamma$ of a finite-volume complete hyperbolic $3$-manifold $M$,
it is possible to associate to any representation $\rho:\Gamma \rightarrow
\textup{Isom}(\mathbb{H}^3)$ a numerical invariant called volume. This invariant is bounded by
the hyperbolic volume of $M$ and satisfies a
rigidity condition: if the volume of $\rho$ is maximal, then $\rho$  must
be conjugated to the holonomy of the hyperbolic structure of $M$. This paper generalizes this
rigidity result by showing that if a sequence of representations of $\Gamma$ into
$\textup{Isom}(\mathbb{H}^3)$ satisfies $\lim_{n \to \infty} \textup{Vol}(\rho_n) =
\textup{Vol}(M)$, then there must exist a sequence of elements $g_n \in
\textup{Isom}(\mathbb{H}^3)$ such that the representations $g_n \circ \rho_n \circ g_n^{-1}$
converge to the holonomy of $M$. In particular if the sequence $(\rho_n)_{n \in \mathbb{N}}$ converges to an ideal point of the
character variety, then the sequence of volumes must stay away from the maximum.
In this way we give an answer to ~\cite[Conjecture 1]{guilloux:articolo}. We conclude by
generalizing the result to the case of $k$-manifolds and representations in
$\textup{Isom}(\mathbb H^m)$, where $m\geq k \geq 3$.
\end{abstract}

\vspace{20pt}


\section{Introduction}

Let $\Gamma$ be the fundamental group of a (non-compact) complete hyperbolic $3$-manifold $M$ with finite
volume (hence with toric cusps). The volume of a representation $\rho: \Gamma \rightarrow
PSL(2,\C)=\textup{Isom}^+(\mathbb{H}^3)$ can be defined in several ways. For instance, it can
be thought of as the integral of the pullback of the volume form on $\mathbb{H}^3$ along any
pseudo-developing map $D: \tilde M \rightarrow \mathbb{H}^3$, as written both
in~\cite{dunfield:articolo} and in~\cite{franc04:articolo}. Since the volume is indipendent of
the choice of the pseudeveloping map $D$, when $D$ is a straight map this notion is a
generalization of the volume of a solution for the gluing equations associated to a
triangulation of $M$, given for instance in~\cite{neumann:articolo}. Another way to define the
volume of a representation $\rho$ is based on the properties of the bounded cohomology of the
group $\textup{Isom}^+(\mathbb{H}^3)$. In~\cite{bucher2:articolo} the authors prove that the
volume class $\omega_3$ is a generator for the cohomology group
$H^3_{cb}(\textup{Isom}^+(\mathbb{H}^3))$, hence, starting from it, we can construct a class in
$H^3_b(\Gamma)$ by pulling back $\omega_3$ along $\rho_b^*$ and then evaluate this class with a
relative fundamental class $[N,\partial N] \in H^3(N,\partial N)$ via the Kronecker
pairing. Here $N$ is any compact core of $M$. The equivalence between the two different
definitions it is shown for example in~\cite{kim:articolo}. To extend the notion of volume to
the more general case of representations into the whole group of the isometries
$\textup{Isom}(\mathbb{H}^3)$ the approach of~\cite{franc06:articolo} is to consider the
infimum all over the values $\textup{Vol}(D)$, where $D:\tilde M \rightarrow \mathbb{H}^3$ is a
properly ending smooth $\rho$-equivariant map (the existence of such maps is proved in~\cite{franc06:articolo} as well).

Since the volume is invariant under conjugation by an element of
$\textup{Isom}(\mathbb{H}^3)$, there exists a well-defined volume function on the character
variety $X(\Gamma,\textup{Isom}(\mathbb{H}^3))$ which is continuous with respect to the
topology of pointwise convergence. Moreover, this function satisfies a well-known rigidity
condition. As written in~\cite{franc04:articolo} (see~\cite{franc06:articolo} for higher
dimensional cases), for
any representation $\rho$ we have that $\vol(\rho)\leq \vol(M)$ and if equality holds
we must have $\rho=gig^{-1}$ where $i$ is the standard
lattice embedding $i:\Gamma \rightarrow \textup{Isom}(\mathbb{H}^3)$ and $g \in
\textup{Isom}(\mathbb{H}^3)$. Beyond its intrinsic interest, this result has important
consequences for example in the study of the AJ-conjecture for hyperbolic knot manifolds, as
written in~\cite{le:articolo}.

By generalizing both~\cite{culler:articolo} and~\cite{morgan:articolo},
in~\cite{morgan2:articolo} the author proposed a compactification of the variety
$X(\Gamma,\textup{Isom}(\mathbb{H}^3))$ whose ideal points can be interpreted as projective
lenght functions of isometric $\Gamma$-actions on real trees. It is natural to ask if there exists a way to extend the volume function to this compactification and which are the possible values attained at any ideal point. For instance, one could ask if it is possible to extend the ridigity of volume also at ideal points. A similar problem has already been conjectured in~\cite{guilloux:articolo} relatively to the rigidity of the Borel function with respect to the ideal points of the Morgan--Shalen compactification of the character variety $X(\Gamma,PSL(n,\C))$. More precisely, the statement is 

\begin{conj}[{\upshape ~\cite{guilloux:articolo}}]
Let $M$ be an orientable cusped hyperbolic $3$-manifold. Let $X_n$ be the geometric component of the $PSL(n,\C)$-character variety and let $\textup{hol}_{\textup{per}}$ be the peripheral holonomy map. Then, outside a neighborhood of the geometric representation $[\rho_{\textup{geom}}]$ the Borel function is bounded away from its maximum on $X_n$. 
\end{conj}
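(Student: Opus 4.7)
The plan is to deduce the conjecture from a sequential rigidity statement for the Borel function: given any sequence $\{[\rho_k]\}\subset X_n$ such that $\textup{B}([\rho_k])\to \binom{n+1}{3}\vol(M)$, there should exist elements $g_k\in PSL(n,\C)$ with $g_k\rho_k g_k^{-1}$ converging pointwise to $\rho_{\textup{geom}}$. The conjecture follows immediately: if the Borel function were not bounded away from its maximum outside some neighborhood $U$ of $[\rho_{\textup{geom}}]$, a diagonal extraction would yield a sequence of conjugacy classes outside $U$ whose normalizations converge (either in $X_n$ or to an ideal point of the Morgan--Shalen compactification) to $[\rho_{\textup{geom}}]$, contradicting the choice of $U$.

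The first ingredient is the static rigidity. Using that the Borel class $\beta_n$ generates $H^3_{cb}(PSL(n,\C))$ and that the principal embedding $\pi_n\colon PSL(2,\C)\to PSL(n,\C)$ satisfies $\pi_n^*\beta_n=\binom{n+1}{3}\omega_3$ (by Bucher--Burger--Iozzi, building on Dupont), the Gromov-norm inequality $\|\rho^*\beta_n\|_\infty\leq \|\beta_n\|_\infty$ together with the computation of the simplicial volume of $M$ yields $\textup{B}([\rho])\leq \binom{n+1}{3}\vol(M)$, with equality forcing $\rho$ to be conjugate to $\rho_{\textup{geom}}$. In the cusped case this requires defining $\rho^*\beta_n$ as a \emph{relative} bounded class, which is where the peripheral holonomy map $\textup{hol}_{\textup{per}}$ enters: one needs the pullbacks of $\beta_n$ along each peripheral subgroup to be controlled so that the pairing with $[N,\partial N]$ makes sense and behaves continuously along sequences.

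The heart of the proof is the sequential step, which is the direct analogue of the main theorem of the present paper with $\textup{Isom}(\mathbb{H}^3)$ replaced by $PSL(n,\C)$. I would choose $\rho_k$-equivariant measurable boundary maps $\phi_k\colon\partial\mathbb{H}^3\to PSL(n,\C)/B$ into the Furstenberg boundary and, after conjugating $\rho_k$ by $g_k$ so that three fixed parabolic points have prescribed images, extract a weak-$*$ limit $\phi_\infty$. The hypothesis that $\textup{B}([\rho_k])$ tends to the maximum, combined with the classification of maximising configurations of the Borel cocycle on $(PSL(n,\C)/B)^4$, should force $\phi_\infty$ to coincide almost everywhere with the canonical boundary map of $\rho_{\textup{geom}}$; the Zariski density of $\rho_{\textup{geom}}(\Gamma)$ then upgrades the almost-everywhere identification of boundary maps to pointwise convergence of $g_k\rho_k g_k^{-1}$ to $\rho_{\textup{geom}}$.

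The main obstacle is precisely this last step, and it is where higher rank genuinely complicates the $n=2$ argument of the paper. Since $SL(n,\C)/SU(n)$ is not negatively curved and the Furstenberg boundary is not a manifold, the smooth pseudo-developing techniques of the paper are unavailable and must be replaced by a purely measurable analysis of boundary maps. The classification of maximising tetrahedra for $\beta_n$ on the full flag variety is considerably more subtle than the characterisation of regular ideal tetrahedra in $\partial\mathbb{H}^3$, and controlling $\phi_k$ on the ends of $M$---which is exactly where $\textup{hol}_{\textup{per}}$ decides whether the sequence $[\rho_k]$ converges in $X_n$ or escapes to an ideal point---demands a relative bounded-cohomology argument that, to my knowledge, is not yet in the literature for $n\geq 3$.
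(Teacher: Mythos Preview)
The statement you are asked to prove is not proved in the paper: it is Guilloux's conjecture, quoted verbatim, and the paper establishes only the case $n=2$ (representations into $\textup{Isom}(\mathbb{H}^3)$). So there is no ``paper's own proof'' of the full statement to compare against; what the paper contains is a proof of the $n=2$ instance, namely Theorem~\ref{convergence} and Corollary~\ref{rigidity}.

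Restricting to $n=2$, your outline and the paper's argument share the same overall architecture---reduce the conjecture to a sequential rigidity statement and then invoke continuity of the volume plus static rigidity at the limit---but the analytic engine is entirely different. You propose to work with measurable boundary maps $\phi_k:\partial\mathbb{H}^3\to G/B$, extract a weak-$*$ limit, and use the classification of maximal configurations for the Borel cocycle. The paper instead uses the Besson--Courtois--Gallot natural map $F_n:\mathbb{H}^3\to\mathbb{H}^3$: the hypothesis $\textup{Vol}(\rho_n)\to\textup{Vol}(M)$ forces $|\det D_xF_n|\to 1$ a.e., which via the scalar function $\psi(H)=\det(H)/\det(I-H)^2$ on $Sym^+_1(3,\R)$ (Proposition~\ref{maximum}) forces the quadratic forms $H_n$ to converge to $I/3$; this yields uniform local bounds on $\|D_xF_n\|$, hence bounded translation lengths for $\rho_n(\gamma)$, hence precompactness in the character variety. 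The BCG route is smooth and quantitative, avoids any appeal to the structure of maximal cocycle configurations, and never touches the relative bounded cohomology issues you raise; your route is measurable and cohomological, and would in principle generalise to higher rank if the obstacles you list could be overcome.

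For $n\geq 3$ your proposal is honest about its own gap: the classification of maximal configurations for $\beta_n$ on the full flag variety and the passage from a.e.\ agreement of boundary maps to convergence of representations are not supplied, and you say so. That is a genuine missing idea, not a routine detail, and it is precisely why the conjecture is still open for $n\geq 3$. The paper makes no claim in that range either; its techniques are tied to the Riemannian geometry of $\mathbb{H}^m$ (it does extend to $\textup{Isom}(\mathbb{H}^m)$ for $m\geq k\geq 3$, Theorem~\ref{convk>3}, but not to $PSL(n,\C)$).
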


In this paper we are going to prove the conjecture for representations into $\textup{Isom}(\mathbb{H}^3)$, hence in the particular case of $n=2$. Indeed we will prove the following

\begin{teor}\label{convergence}
Let $\Gamma$ be the fundamental group of a non-compact complete hyperbolic $3$-manifold $M$ with finite volume.
Let $\rho_n:\Gamma \rightarrow \textup{Isom}(\mathbb{H}^3)$ be a sequence of representations such that $\lim_{n \to \infty} \textup{Vol}(\rho_n)=\textup{Vol}(M)$. Then there must exist a sequence of elements $g_n \in \textup{Isom}(\mathbb{H}^3)$ such that the sequence $g_n \circ \rho_n \circ g_n^{-1}$ converges to the standard lattice embedding $i:\Gamma \rightarrow \textup{Isom}(\mathbb{H}^3)$.
\end{teor}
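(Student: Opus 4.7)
The plan is to argue by contradiction using a compactness argument in the Morgan--Shalen compactification of the character variety, combined with the continuity and the maximal-volume rigidity recalled in the introduction. Suppose no sequence $g_n \in \textup{Isom}(\mathbb{H}^3)$ yields $g_n \circ \rho_n \circ g_n^{-1} \to i$. Up to extracting a subsequence, the characters $[\rho_n]$ converge in the Morgan--Shalen compactification either (a) to a point $[\rho_\infty]\in X(\Gamma,\textup{Isom}(\mathbb{H}^3))$, or (b) to an ideal point represented by a non-trivial isometric $\Gamma$-action on a real tree $T$. In case (a) continuity of the volume function immediately yields $\textup{Vol}(\rho_\infty)=\textup{Vol}(M)$, and the rigidity theorem forces $[\rho_\infty]=[i]$.

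The core of the argument is ruling out case (b). At an ideal point there exists $\gamma\in\Gamma$ for which the translation lengths of $\rho_n(\gamma)$ in $\mathbb{H}^3$ diverge, so any sequence of $\rho_n$-equivariant pseudo-developing maps $D_n\colon\tilde M \to \mathbb{H}^3$ must stretch unboundedly along $\Gamma$-orbits. The plan is to couple this fact with the variational characterization of $\textup{Vol}(\rho_n)$ as an infimum of integrals $\int_F D_n^*\,\omega_{\mathbb{H}^3}$ over a fundamental domain $F$: choosing $D_n$ straight with respect to an ideal triangulation of $M$, the volume becomes a sum of signed volumes of geodesic simplices in $\mathbb{H}^3$, each bounded by the volume $v_3$ of the regular ideal tetrahedron and attaining this bound only when the simplex is itself regular ideal. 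A definite fraction of these simplices is forced to degenerate as $n\to\infty$ (since some of their edges are labelled by elements whose images have diverging translation lengths), producing a quantitative loss and yielding $\limsup_n \textup{Vol}(\rho_n) < \textup{Vol}(M)$, contradicting the hypothesis.

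Once case (b) is excluded we have $[\rho_n]\to[i]$ in $X(\Gamma,\textup{Isom}(\mathbb{H}^3))$. To upgrade this to convergence of representations, we use that $i$ is faithful and Zariski dense, hence irreducible, so the fibre of the projection $\textup{Hom}(\Gamma,\textup{Isom}(\mathbb{H}^3))\to X(\Gamma,\textup{Isom}(\mathbb{H}^3))$ above $[i]$ is the single conjugacy class of $i$ and is smoothly parameterized by an orbit of $\textup{Isom}(\mathbb{H}^3)$. Normalizing each $\rho_n$ by choosing $g_n$ so that $g_n\circ\rho_n\circ g_n^{-1}$ sends a fixed generating tuple of $\Gamma$ to images as close as possible to those under $i$ then produces the desired pointwise convergence $g_n\circ\rho_n\circ g_n^{-1}\to i$.

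The main obstacle is the quantitative volume drop at ideal points in case (b): translating the qualitative Morgan--Shalen degeneration into a strict inequality on the pullback integrals. This step requires a careful interplay between the thick-thin decomposition of $M$, the behaviour of the pseudo-developing maps in the cusps (where the properly ending condition must be used to rule out pathological accumulation of volume near infinity), and the classical upper bounds on volumes of geodesic simplices in $\mathbb{H}^3$ together with their equality case.
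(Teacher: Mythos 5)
Your reduction to the Morgan--Shalen dichotomy has a circularity problem at its core: the whole difficulty of the theorem is concentrated in your case (b), and the statement you need there --- that volumes of representations degenerating to an ideal point stay uniformly below $\textup{Vol}(M)$ --- is precisely Corollary~\ref{rigidity}, which in the paper is \emph{deduced from} Theorem~\ref{convergence}, not available as an input. The mechanism you propose to obtain the quantitative drop does not work as stated. First, the termwise bound by $v_3$ (the volume of the regular ideal tetrahedron) on straightened simplices of a fixed ideal triangulation does not compare with $\textup{Vol}(M)$: for a triangulation with $N$ tetrahedra one only gets $\textup{Vol}(\rho_n)\leq N v_3$, and $N v_3$ exceeds $\textup{Vol}(M)$ unless $M$ is triangulated by regular ideal tetrahedra, so even if a definite fraction of simplices degenerated the remaining ones could still sum to more than $\textup{Vol}(M)$. (The inequality $\textup{Vol}(\rho)\leq\textup{Vol}(M)$ is not proved by such a termwise comparison; it requires efficient cycles or natural maps.) Second, diverging translation lengths of $\rho_n(\gamma)$ do not force the straightened simplices touching $\gamma$ to degenerate: simplices with arbitrarily long (even infinite, ideal) edges can have volume equal to $v_3$, so no quantitative loss follows from the Morgan--Shalen degeneration without substantial extra work. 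You acknowledge this step as the ``main obstacle,'' but it is exactly the content of the theorem, so the proposal leaves the essential analytic work undone.

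The paper's route avoids the compactification entirely and extracts compactness in the character variety directly from the volume hypothesis, via the BCG natural maps $F_n$ associated to $\rho_n$ (normalized so that $F_n(O)=O$). The hypothesis $\textup{Vol}(\rho_n)\to\textup{Vol}(M)$, combined with the properly ending $\varepsilon$-natural maps and dominated convergence, gives $|\det D_xF_n|\to 1$ almost everywhere; the BCG inequality reduces this to the study of $\psi(H)=\det(H)/\det(I-H)^2$ on trace-one positive definite symmetric matrices, and Proposition~\ref{maximum} shows that near-maximality of $\psi$ forces $H_n\to I/3$. An Egorov-type argument upgrades this to uniform convergence on compact sets, which yields a uniform Lipschitz bound on $F_n$ on compacta, hence $\mathfrak{L}_{\mathbb{H}^3}(\rho_n(\gamma))\leq C\,d(\gamma x,x)$ for all $\gamma$; this bounds the sequence in $X(\Gamma,\textup{Isom}(\mathbb{H}^3))$ and, with the normalization $F_n(O)=O$, gives convergence to some $\rho_\infty$ with $\textup{Vol}(\rho_\infty)=\textup{Vol}(M)$, after which classical volume rigidity identifies $\rho_\infty$ with a conjugate of $i$ --- the step of your case (a), which is the only part of your outline that matches the paper. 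If you wish to salvage your strategy, you would need an independent proof of the uniform volume gap at ideal points, which is not supplied by the degeneration heuristics you describe.
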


Which implies 

\begin{cor}\label{rigidity}
Suppose $\rho_n:\Gamma  \rightarrow \textup{Isom}(\mathbb{H}^3)$ is a sequence of representations converging to any ideal point of the Morgan--Shalen compactification of $X(\Gamma,\textup{Isom}(\mathbb{H}^3))$. Then the sequence of volumes $\textup{Vol}(\rho_n)$ must be bounded from above by $\textup{Vol}(M)-\varepsilon$ for a suitable $\varepsilon>0$. 
\end{cor}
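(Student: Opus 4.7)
The plan is to argue by contradiction, treating Theorem \ref{convergence} as a black box. If the conclusion of the corollary fails, then for every $\varepsilon > 0$ there exists some $n$ with $\textup{Vol}(\rho_n) > \textup{Vol}(M) - \varepsilon$, so I would begin by extracting a subsequence $(\rho_{n_k})$ along which $\textup{Vol}(\rho_{n_k}) \to \textup{Vol}(M)$.

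Next I would apply Theorem \ref{convergence} to this subsequence, obtaining elements $g_{n_k} \in \textup{Isom}(\mathbb{H}^3)$ for which the conjugates $g_{n_k} \circ \rho_{n_k} \circ g_{n_k}^{-1}$ converge pointwise to the standard lattice embedding $i$. Since characters are invariant under conjugation in $\textup{Isom}(\mathbb{H}^3)$ and since trace functions are continuous with respect to pointwise convergence, this yields $\chi_{\rho_{n_k}} = \chi_{g_{n_k} \circ \rho_{n_k} \circ g_{n_k}^{-1}} \to \chi_i$, and hence $[\rho_{n_k}] \to [i]$ inside the character variety $X(\Gamma, \textup{Isom}(\mathbb{H}^3))$.

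To close the argument I would observe that the hypothesis on $(\rho_n)$ forces every subsequence of $([\rho_n])$ to converge to the prescribed ideal point of the Morgan--Shalen compactification. But the limit $[i]$ sits in the interior of that compactification, namely inside the character variety itself, and ideal points are by construction disjoint from the character variety. This produces the desired contradiction and yields the existence of the uniform $\varepsilon > 0$ claimed. The only point that I expect to require a short justification, rather than a routine citation, is the compatibility between pointwise convergence of the conjugated representations and convergence of their classes in the Morgan--Shalen topology (i.e.\ that a subsequence whose characters converge to a character of the variety cannot simultaneously tend to an ideal point); once that is acknowledged, the corollary is an immediate consequence of Theorem \ref{convergence}.
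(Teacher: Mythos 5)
Your proposal is correct and follows essentially the same route as the paper: negate the conclusion to get a (sub)sequence with volumes tending to $\textup{Vol}(M)$, apply Theorem~\ref{convergence}, and note that convergence of the conjugated representations to $i$ forces convergence to $[i]$ inside the character variety, which is incompatible with convergence to an ideal point. Your version merely spells out the subsequence extraction and the character-convergence step that the paper's two-line proof leaves implicit.
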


We also prove the generalization of Theorem~\ref{convergence} to the case of $k$-manifolds and
representations into $\textup{Isom}(\mathbb H^m)$ with $m \geq k \geq 3$ (Theorem~\ref{convk>3}).

The interest of Theorem~\ref{convergence} relies on the fact that $\Gamma$ admits non-trivial deformations
inside $PSL(2,\mathbb{C})$. Indeed by both~\cite{thurston:libro} and~\cite{neumann:articolo} the component 
of the character variety $X(\Gamma,PSL(2,\mathbb{C}))$ containing the class of the standard lattice embedding has complex 
dimension equal to the number of cusps of $M$.

Similarly, when $m>k\geq 3$ the space of representations is rich. For instance one can bend along
geodesic hypersurfaces (see~\cite{Ap}), but also purely parabolic deformations are possible (see~\cite{FP} for the study of deformations in $\mathbb{H}^4$ of complements of hyperbolic two-bridge knots).

When $k=m>3$ something different happens. Garland and Raghunathan
showed in~\cite{garland:articolo} that if $\Gamma$ is a non-uniform lattice of $\textup{Isom}(\mathbb{H}^k)$ without torsion 
then it holds $H^1(\Gamma,Ad \circ i)=0$, where $i$ denotes the standard lattice embedding of $\Gamma$. This phenomenon is 
called infinitesimal rigidity and it implies that the class $[i]$ is isolated in the character variety $X(\Gamma,\textup{Isom}(\mathbb{H}^k))$ and hence
$\Gamma$ is locally rigid. The result of Garland and Raghunathan extended to non-uniform lattices the property of local rigidity, already known
for uniform lattices by~\cite{selberg:articolo,calabi:articolo} and~\cite{weil:articolo}. It is worth noticing that the local rigidity of $\Gamma$ when $k \geq 4$ together with Theorem~\ref{convk=m} will imply that the sequence $(\rho_n)_{n \in \mathbb{N}}$ must be eventually constant in the character variety.

The proof of Theorem~\ref{convergence} will be based essentially on the so-called BCG--natural map
associated to a non-elementary representation $\rho:\Gamma \rightarrow
\textup{Isom}(\mathbb{H}^3)$, described in~\cite{besson95:articolo},~\cite{besson96:articolo}
and~\cite{besson99:articolo}. Given such a representation there exists a map $F:\mathbb{H}^3
\rightarrow \mathbb{H}^3$ which is equivariant with respect to $\rho$, smooth and satisfies
$|\det D_xF| \leq 1$ for every $x \in \mathbb{H}^3$. Moreover, the equality holds if and only if
$D_xF$ is an isometry, and we will exploit the fact that this claim can be made $\varepsilon$-accurate
if $|\det D_xF|>1-\varepsilon$. These properties make the natural map $F$ a powerful tool in the study
of volume rigidity (see~\cite{souto:articolo} for this kind of application).

The reader who is not an expert of BCG techniques may wonder why we need to assume $k\geq 3$. The crucial points are
the estimate $|\det D_xF|\leq 1$ and the study of the equality case. The latter boils down to the
study of the function $\psi(H):=\det(H)/\det(I-H)^2$ defined on the space of symmetric, positive definite,
$k \times k$ matrices with trace equal to $1$, and the reader can check that the case $k=2$ and $k\geq 3$
differ dramatically (see Remark~\ref{rem:BCGAppB}). 

\medskip

The paper is structured as follows. The first section is dedicated to preliminary definitions. We briefly recall the notion of
barycentre of a positive Borel measure on $\partial_\infty \mathbb{H}^k$ and the definition of
natural map $F$ associated to a non-elementary representation $\rho:\Gamma \rightarrow
\textup{Isom}(\mathbb{H}^m)$. The second section is devoted to the proof of the main
theorem. In the last section we describe some consequences of this result for the extendibility
of the volume function to the Morgan-Shalen compactification of
$X(\Gamma,\textup{Isom}(\mathbb{H}^3))$. We conclude by extending the main theorem to the more
general case of sequences of representations $\rho_n:\Gamma \rightarrow
\textup{Isom}(\mathbb{H}^m)$, where $\Gamma < \textup{Isom}^+(\mathbb{H}^k)$ such that
$\mathbb{H}^k/\Gamma$ is a complete hyperbolic $k$-manifold of finite volume and $m \geq k$.

\medskip

\textbf{Acknowledgements}: The authors would like to thank Juan Souto for the precious help and
the enlightening conversations and Thang Le for the essential information he gave us about the
evolution of this problem. We also thank the referee for his useful comments and remarks.


\section{Preliminary definitions}

\subsection{Barycentre of a measure}

We start by fixing some notation. From now until the end of the paper we are going to choose
the origin $O$ of the disk model as basepoint in $\mathbb{H}^k$. Moreover, we will use the same letter $O$ to denote basepoints in different hyperbolic spaces. Let $B_K(x,\theta)$ be the Busemann function of $\mathbb{H}^k$ normalized at $O$, that means for every $\theta \in \partial_\infty \mathbb{H}^k$ we set

\[
B_K(x,\theta)=\lim_{t \to \infty} d(x,c(t))-t,
\]
where $c$ is the geodesic ray starting at $O=c(0)$ and ending at $\theta$. The notation $B_K$
refers to the Busemann function relative to the $k$-dimensional hyperbolic space.

Let $\beta$ be a positive probability measure on $\partial_\infty \mathbb{H}^k$. Thanks to the convexity of Busemann functions the map

\[
\varphi_\beta: \mathbb{H}^k \rightarrow \R, \hspace{10pt} \varphi_\beta(y):=\int_{\partial_\infty \mathbb{H}^k} B_K(y,\theta)d\beta(\theta)
\]
is stricly convex, provided that $\beta$ is not the sum of two Dirac measures. Additionally, if the measure $\beta$ does not contain any atom of mass greater or equal than $1/2$, the following condition holds 

\[
\lim_{y \to \partial_\infty \mathbb{H}^k} \varphi_\beta(y)=\infty.
\]
and this implies that $\varphi_\beta$ admits a unique minimum in $\mathbb{H}^k$ (see~\cite[Appendix A]{besson95:articolo}).
On the other hand, if $\beta$ contains an atom of mass at least $1/2$, then it is readily
checked that the minimum of
$\varphi_\beta$ is $-\infty$ and it is attained at the atom. 

\begin{deft}
Let $\beta$ be any positive probability measure $\beta$ of finite mass which is not the sum of two Dirac masses with the same weight. If $\beta$ contains an atom $x$ of mass greater or equal than $1/2$ then we define its \textit{barycentre} as

\[
\textup{bar}_\mathcal{B}(\beta)=x,
\]
otherwise we define it as the point

\[
\textup{bar}_\mathcal{B}(\beta)=\textup{argmin}(\varphi_\beta).
\]

The letter $\mathcal{B}$ emphasizes the dependence of the construction on the Busemann functions. The barycentre of $\beta$ will be a point in $\overline{\mathbb{H}^k}$ which satisfies the following properties:

\begin{itemize}
	\item it is $\textup{weak-}^*$ continuous, that is if $\beta_n \to \beta$ in the
          $\textup{weak-}^*$ topology (and no measure is the sum of two atoms with equal weight)
           it holds
	
	\[
	\lim_{n \to \infty} \textup{bar}_\mathcal{B}(\beta_n)=\textup{bar}_\mathcal{B}(\beta)
	\]

	\item it is $\textup{Isom}(\mathbb{H}^k)$-equivariant, indeed for every $g \in
          \textup{Isom}(\mathbb{H}^k)$ (if $\beta$ is not the sum of two equal atoms) we have

	\[
	\textup{bar}_\mathcal{B}(g_*\beta)=g(\textup{bar}_\mathcal{B}(\beta)),
	\]
	
	\item when $\beta$ does not contain any atom of weight greater or equal than $1/2$, it is characterized by the following equation

          \begin{equation}\label{dadiff}
	\int_{\partial_\infty \mathbb{H}^k} dB_K|_{(\textup{bar}_\mathcal{B}(\beta),y)}(\cdot)d\beta(y)=0.
          \end{equation}

\end{itemize}

\end{deft}

\subsection{The Patterson-Sullivan family of measures and the BCG--natural map}

For more details about the following definitions and constructions we recomend the reader to see the first sections of~\cite{franc09:articolo}. Let $\Gamma < \textup{Isom}(\mathbb{H}^k)$ be a discrete group of divergence type, that is a subgroup for which the Poincar\'e series diverges at the critical exponent $\delta(\Gamma)$. For example, if $\Gamma$ is the fundamental group of a complete $k$-dimensional hyperbolic manifold $M$ of finite volume we have that $\delta(\Gamma)=k-1$. 

\begin{deft}
Let $\mathcal{M}^1(X)$ be the set of positive probability measures on a space $X$. The \textit{family of Patterson-Sullivan measures associated to $\Gamma$} is a family of measures \mbox{$\{\mu_x\} \in \mathcal{M}^1(\partial_\infty \mathbb{H}^k)$}, where $x \in \mathbb{H}^k$, which satisfies the following properties

\begin{itemize}
	\item the family is $\Gamma$-equivariant, that is $\mu_{\gamma x}=\gamma_*(\mu_x)$ for every $\gamma \in \Gamma$ and every $x \in \mathbb{H}^k$,
	\item For every $x,y \in \mathbb{H}^k$ it holds 

		\[
			d\mu_x(\theta)=e^{-\delta(\Gamma)B_y(x,\theta)}d\mu_y(\theta)
		\]
	where $B_y(x,\theta)$ is the Busemann function normalized at $y$.
\end{itemize}

\end{deft} 
 
If $\Gamma$ is the fundamental group of a complete $k$-dimensional hyperbolic manifold $M$ of
finite volume, let $\{ \mu_x \}$ be the family of Patterson-Sullivan measures associated to
$\Gamma$. We set $\mu=\mu_O$ and we notice that in the present case $\mu_O$ is the standard
visual measure on $\partial_\infty\mathbb H^k$ (i.e. the usual spherical Lebesgue measure).

 Let $\rho:\Gamma \rightarrow \textup{Isom}(\mathbb{H}^m)$ be a non-elementary
 representation. By both~\cite[Corollary 3.2]{burger3:articolo} and~\cite[Theorem 1.5]{franc09:articolo} there exists a
 $\rho$-equivariant measurable map $$D:\partial_\infty \mathbb{H}^k \rightarrow \partial_\infty
 \mathbb{H}^m$$ and two different maps of this type must agree on a full $\mu$-measure set.
We define

\[
\beta_x:=D_*(\mu_x).
\]

Cleary the measure $\beta_x$ lives in $\mathcal{M}^1(\partial_\infty \mathbb{H}^m)$ for every
$x$. We want to emphasize that starting from a point $x \in \mathbb{H}^k$ we end up with a
measure $\beta_x \in \mathcal{M}^1(\partial_\infty \mathbb{H}^m)$. 

Since we have a non-elementary representation, $\beta_x$ does not contain any atom of mass greater or equal than $1/2$. Indeed it holds

\begin{lem}
Let $\rho:\Gamma \rightarrow \textup{Isom}(\mathbb{H}^m)$ be a non-elementary representation and let $D:\partial_\infty \mathbb{H}^k \rightarrow \partial_\infty \mathbb{H}^m$ be a $\rho$-equivariant measurable map. Then $D(x)\neq D(y)$ for almost every $(x,y) \in \partial_\infty \mathbb{H}^k \times \partial_\infty \mathbb{H}^k$.
\end{lem}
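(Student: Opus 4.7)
The plan is to argue by double ergodicity of the $\Gamma$-action on the boundary squared, combined with non-elementarity.

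Consider the diagonal set
\[
A=\{(x,y)\in \partial_\infty \mathbb{H}^k \times \partial_\infty \mathbb{H}^k : D(x)=D(y)\},
\]
which is measurable because $D$ is measurable and the diagonal is closed in the (separable metrizable) space $\partial_\infty \mathbb{H}^m \times \partial_\infty \mathbb{H}^m$. The first step is to observe that $A$ is invariant under the diagonal $\Gamma$-action: if $(x,y)\in A$, then by $\rho$-equivariance $D(\gamma x)=\rho(\gamma)D(x)=\rho(\gamma)D(y)=D(\gamma y)$, so $(\gamma x,\gamma y)\in A$.

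Next I would invoke the fact that, since $\Gamma$ is a lattice in $\textup{Isom}(\mathbb{H}^k)$, the diagonal action of $\Gamma$ on $\partial_\infty \mathbb{H}^k \times \partial_\infty \mathbb{H}^k$ is ergodic with respect to the Patterson--Sullivan measure class $\mu\otimes\mu$. This is the classical double ergodicity that follows from mixing of the geodesic flow on the unit tangent bundle of $M=\mathbb{H}^k/\Gamma$ with respect to the finite Bowen--Margulis--Sullivan (Liouville) measure; see for instance~\cite{franc09:articolo} for the relevant setup. Since $A$ is $\Gamma$-invariant, ergodicity forces it to have either zero or full $\mu\otimes\mu$-measure.

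The remaining step is to rule out full measure. If $(\mu\otimes\mu)(A)=1$, then by Fubini there exists $x_0\in\partial_\infty \mathbb{H}^k$ such that $D(y)=D(x_0)$ for $\mu$-almost every $y$; call this common value $\xi\in\partial_\infty \mathbb{H}^m$. Then $D$ is essentially constant equal to $\xi$. For any $\gamma\in\Gamma$, using $\rho$-equivariance and the fact that $\gamma_*\mu$ is in the same measure class as $\mu$, we get
\[
\rho(\gamma)\xi=\rho(\gamma)D(y)=D(\gamma y)=\xi
\]
for $\mu$-almost every $y$. Hence $\xi$ is a global fixed point of $\rho(\Gamma)$ on $\partial_\infty \mathbb{H}^m$, contradicting the non-elementarity of $\rho$. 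Therefore $A$ has $\mu\otimes\mu$-measure zero, which is precisely the assertion.

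The delicate point is really the invocation of double ergodicity: one must be sure that for a finite-volume (possibly non-uniform) lattice the Patterson--Sullivan measure gives an ergodic diagonal action on the boundary squared, and that this holds in the generality assumed (any $k\geq 2$, divergence type). Everything else is a short measurable/equivariance manipulation.
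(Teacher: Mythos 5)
Your proposal is correct and follows essentially the same route as the paper: define the $\Gamma$-invariant set $A=\{(x,y):D(x)=D(y)\}$, use ergodicity of the diagonal $\Gamma$-action on $\partial_\infty\mathbb{H}^k\times\partial_\infty\mathbb{H}^k$ with respect to $\mu\times\mu$ (the paper cites Sullivan, Nicholls, Yue, Roblin for this), and derive from the full-measure alternative a global fixed point of $\rho(\Gamma)$, contradicting non-elementarity. The only cosmetic difference is that you conclude directly that $D$ is essentially constant and use quasi-invariance of $\mu$, whereas the paper intersects the $\gamma$-translates of a full-measure slice over the countable group before applying equivariance; both are the same idea.
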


\begin{proof}
Define the set $A:=\{ (x,y) \in \partial_\infty \mathbb{H}^k \times \partial_\infty
\mathbb{H}^k| D(x)=D(y)\}$. Since the map $D$ is $\rho$-equivariant, $A$ is a
$\Gamma$-invariant measurable subset of $\partial_\infty \mathbb{H}^k \times \partial_\infty
\mathbb{H}^k$. By the ergodicity of the action of $\Gamma$ on $\partial_\infty \mathbb{H}^k
\times \partial_\infty \mathbb{H}^k$ with respect to the measure $\mu\times\mu$ (see \cite{Yue96,Nic89,Rob00,Sul79}), the set $A$ must have
either null measure or full measure. By contradiction, suppose that $A$ has full measure. This
implies that for almost all $x$, the slice $A(x):=\{ y
\in \partial_\infty\mathbb{H}^k|D(x)=D(y)\}$ has full measure in $\partial_\infty
\mathbb{H}^k$. Isometries preserve the class of $\mu$, in particular, for any $\gamma\in
\Gamma$, if $A(x)$ has full measure then so does $\gamma A(x)$.
Since $\Gamma$ is countable, this implies that for almost all $x$, the set
$A_\Gamma(x):=\cap_{\gamma \in \Gamma} \gamma^{-1} A(x)$ has full measure in $\partial_\infty
\mathbb{H}^k$. Fix now a point $y \in A_\Gamma(x)$. For any $\gamma\in\Gamma$ we have
$(x,\gamma y)\in A$. In particular\footnote{We use $\gamma=id$ in the first equality and the last follows by equivariance of $D$.} 
 
\[
D(y)=D(x)=D(\gamma y)=\rho(\gamma)D(y)
\]

for every $\gamma \in \Gamma$, but this would imply that $\rho$ is elementary, which is a contradiction. 
\end{proof}

By the previous lemma, for all $x \in \mathbb{H}^k$, we can define 

\[
F(x):=\textup{bar}_\mathcal{B}(\beta_x)
\]
and this point will lie in $\mathbb{H}^m$. In this way we get a map $F:\mathbb{H}^k \rightarrow \mathbb{H}^m$. 

\begin{deft}
The map $F:\mathbb{H}^k \rightarrow \mathbb{H}^m$ is called \textit{natural map} for the
representation $\rho:\Gamma \rightarrow \textup{Isom}(\mathbb{H}^m)$.

Equation~(\ref{dadiff}) becomes

          \begin{equation}\label{dadiff2}
	\int_{\partial_\infty \mathbb{H}^m} dB_M|_{(F(x),y)}(\cdot)d\beta_x(y)=0.
          \end{equation}
and since $\beta_x=D_*(\mu_x)$, it can be rewritten as
          \begin{equation}\label{dadiff3}
	\int_{\partial_\infty \mathbb{H}^k} dB_M|_{(F(x),D(z))}(\cdot)d\mu_x(z)=0.
          \end{equation}

 The natural map is smooth and satisfies the following properties:

	\begin{itemize}
		\item Define the $k$-Jacobian of $F$ as

\[
Jac_k(F)(x):=\max_{u_1,\ldots,u_k}||D_xF(u_1) \wedge \ldots D_xF(u_k)||_{g_{\mathbb{H}^m}}
\]
where $\{ u_1,\ldots,u_k \}$ is an orthonormal frame of the tangent space $T_x\mathbb{H}^k$ with respect to the standard metric induced by $g_{\mathbb{H}^k}$ and the norm $||\cdot||_{g_{\mathbb{H}^m}}$ is the norm on $T_{F_n(x)}\mathbb{H}^m$ induced by $g_{\mathbb{H}^m}$. For every $k \geq 3$, we have $Jac_k(F)\leq 1$ and the equality holds at $x$ is and only is $D_xF:T_x\mathbb{H}^k \rightarrow T_{F(x)}\mathbb{H}^m$ is an isometry (see~\cite[Theorem 1.10]{besson99:articolo}).
  
	\item The map $F$ is $\rho$-equivariant, that is $F(\gamma x)=\rho(\gamma)F(x)$.
	\item By differentiating~(\ref{dadiff3}), one gets that for all $x \in \mathbb{H}^k$, $u
          \in T_x \mathbb{H}^k$, $v \in T_{F(x)}\mathbb{H}^m$ it holds 

	\begin{align*}
	&\int_{\partial_\infty \mathbb{H}^k} \nabla dB_M|_{(F(x),D(z))}(D_xF(u),v)d\mu_x(z)=\\
	&\delta(\Gamma) \int_{\partial_\infty \mathbb{H}^k} dB_M|_{(F(x),D(z))}(v)dB_K|_{(x,z)}(u)d\mu_x(z)
	\end{align*}
	where $\nabla$ is the Levi--Civita connection on $\mathbb{H}^m$. 
	\end{itemize}
\end{deft}

\begin{oss}
We need to require $k \geq 3$ to get the sharpness of the estimate on the Jacobian. Indeed,
this condition is equivalent to a necessary hypothesis which appears in~\cite[Lemma
B.4]{besson95:articolo}.~This point should become more explicit in Equations~$(\ref{eq:BCGAppB})$
and~$(\ref{eq:BCGAppBbis})$ at page~\pageref{eq:BCGAppB}.
\end{oss}

\subsection{Volume of representations and $\varepsilon$-natural maps}\label{epsilon}

If $\Gamma$ is the fundamental group of a non-compact, complete hyperbolic $k$-manifold $M$ of
finite volume, then $M$ is diffeomorphic to the interior of a compact manifold
$\overline M$ whose boundary consists of Euclidean $(k-1)$-manifolds. Denote each boundary
component by $T_i$ with $i=1,\ldots,h$. Recall that for each $T_i$ its fundamental group
$\pi_1(T_i) $ is an abelian parabolic subgroup of $\textup{Isom}(\mathbb{H}^k)$.

 Let $\rho:\Gamma \rightarrow \textup{Isom}(\mathbb{H}^m)$ be a representation and let $D:\mathbb{H}^k \rightarrow \mathbb{H}^m$ be a smooth $\rho$-equivariant map. We want to define its volume $\textup{Vol}(D)$. Let $g_{\mathbb{H}^m}$ be the standard hyperbolic metric on $\mathbb{H}^m$. The pullback of $g_{\mathbb{H}^m}$ along $D$ defines in a natural way a pseudo-metric on $\mathbb{H}^k$, which can be possibly degenerate, and hence it defines a natural $k$-form given by $\tilde \omega_D=\sqrt{|\det D^*g_{\mathbb{H}^m}|}$. The equivariance of $D$ with respect to $\rho$ implies that the form $\tilde \omega_D$ is $\Gamma$-invariant and hence it determines a $k$-form on $M$. Denote this form by $\omega_D$.

\begin{deft}
Let $\rho:\Gamma \rightarrow \textup{Isom}(\mathbb{H}^m)$ be a representation and let $D:\mathbb{H}^k \rightarrow \mathbb{H}^m$ be any smooth $\rho$-equivariant map. The \textit{volume} of $D$ is defined as

\[
\textup{Vol}(D):=\int_M \omega_D
\]
\end{deft}

\indent We keep denoting by $D:\mathbb{H}^k \rightarrow \mathbb{H}^m$ a generic smooth $\rho$-equivariant map. Since the fundamental group of each boundary component $T_i \subset  \partial \overline{M}$ is parabolic, it must fix a unique point on $\partial_\infty \mathbb{H}^k$. Define $c_i=\textup{Fix}(\pi_1T_i)$ and let $r(t)$ be a geodesic ray ending at $c_i$. We say that $D$ is a \textit{properly ending map} if all the limit points of $D(r(t))$ lie either in $\textup{Fix}(\rho(\pi_1T_i))$ or in a finite union of $\rho(\pi_1T_i)$-invariant geodesics.   

\begin{deft}
Given a representation $\rho:\Gamma \rightarrow \textup{Isom}(\mathbb{H}^m)$, we define its \textit{volume} as

\[
\textup{Vol}(\rho):=\inf \{ \textup{Vol}(D)| \text{ $D$ is smooth, $\rho$-equivariant and properly ending}\}.
\]
\end{deft}

When $\rho$ is non-elementary, a priori the BCG--natural map \mbox{$F:\mathbb{H}^k \rightarrow
  \mathbb{H}^m$} associated to $\rho$ is not a properly ending map, hence we cannot compare its
volume with the volume of representation $\rho$. However, for any $\varepsilon >0$ it is
possible to construct a family of smooth functions $F^\varepsilon: \mathbb{H}^k \rightarrow
\mathbb{H}^m$ that $C^1$-converge to $F$ as $\varepsilon \to 0$ and such that
$F^\varepsilon$ is a properly ending map for every $\varepsilon >0$ (see for instance~\cite[Lemma 4.5]{franc06:articolo}).
 
\begin{deft}
For any $\varepsilon >0$ there exists a map $F^\varepsilon:\mathbb{H}^k \rightarrow \mathbb{H}^m$ called $\varepsilon$-\textit{natural map} associated to $\rho$ which satisfies the following properties

\begin{itemize}
	\item $F^\varepsilon$ is smooth and $\rho$-equivariant,
	\item at every point of $\mathbb{H}^k$ we have $Jac_k(F^\varepsilon) \leq 1+\varepsilon$,
	\item for every $x \in \mathbb{H}^k$ it holds $\lim_{\varepsilon \to 0} F^\varepsilon(x)=F(x)$ and $\lim_{\varepsilon \to 0} D_xF^\varepsilon = D_xF$,
	\item $F^\varepsilon$ is a properly ending map.
\end{itemize}
\end{deft}

In particular, since $F^\varepsilon$ is a properly ending map, it holds trivially

\[
\textup{Vol}(\rho) \leq \int_M \sqrt{|\det((F^\varepsilon)^*g_{\mathbb{H}^m})|}.
\]

We are going to use the previous estimate later.

\section{Proof of Theorem~\ref{convergence}}

From now until the end of the section we are going to work in $\mathbb H^3$. We start by fixing the following setting. 

\begin{itemize}
 
	\item A group $\Gamma < \textup{Isom}(\mathbb{H}^3)$ so that $M=\mathbb{H}^3/\Gamma$ is
          a (non-compact) complete hyperbolic manifold of finite volume.
	\item A base-point $O \in \mathbb{H}^3$ used to normalize the Busemann function $B(x,\theta)$, with $x \in \mathbb{H}^3$ and $\theta \in \partial_\infty \mathbb{H}^3$.
	\item The family $\{ \mu_x \}$ of Patterson-Sullivan probability measures. Set $\mu=\mu_O$.
	\item A sequence of representations $\rho_n: \Gamma \rightarrow  \textup{Isom}(\mathbb{H}^3)$ such that $\lim_{n \to \infty}\textup{Vol}(\rho_n) = \textup{Vol}(M)$. 	
\end{itemize}

\begin{lem}
The condition $\lim_{n \to \infty}\textup{Vol}(\rho_n) = \textup{Vol}(M)$ implies that, up to pass to a subsequence, we can suppose that no $\rho_n$ is elementary.
\end{lem}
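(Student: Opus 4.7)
The statement amounts to showing that only finitely many $\rho_n$ can be elementary, which I would deduce from the stronger claim that \emph{every} elementary representation $\rho:\Gamma\to\textup{Isom}(\mathbb{H}^3)$ satisfies $\textup{Vol}(\rho)=0$. Combined with the hypothesis $\textup{Vol}(\rho_n)\to\textup{Vol}(M)>0$, this claim immediately implies that $\rho_n$ is non-elementary for all sufficiently large $n$, so passing to the tail produces the required subsequence.

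To prove $\textup{Vol}(\rho)=0$ for elementary $\rho$, I would split into the three standard cases for elementary subgroups of $\textup{Isom}(\mathbb{H}^3)$: (i) $\rho(\Gamma)$ fixes a point $p\in\mathbb{H}^3$; (ii) $\rho(\Gamma)$ fixes a unique point $\xi\in\partial_\infty\mathbb{H}^3$ and is therefore purely parabolic, preserving every horosphere based at $\xi$; (iii) $\rho(\Gamma)$ preserves a bi-infinite geodesic $\gamma$. In each case there is a natural proper $\rho(\Gamma)$-invariant submanifold of $\mathbb{H}^3$ — the point $\{p\}$, a horosphere at $\xi$, or $\gamma$ — and a smooth $\rho$-equivariant map $D:\mathbb{H}^3\to\mathbb{H}^3$ with image in that submanifold, namely the constant map to $p$, the radial projection onto the horosphere, or the orthogonal projection onto $\gamma$. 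Since in all three cases the image of $D$ has real dimension at most two, the pullback pseudo-metric $D^*g_{\mathbb{H}^3}$ is rank-deficient at every point, so the $3$-form $\omega_D$ vanishes identically and $\textup{Vol}(D)=0$.

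The principal obstacle is the properly ending condition, which these low-dimensional retractions do not satisfy a priori at the cusps of $M$. I would handle this by a cusp-by-cusp interpolation: since $\rho(\pi_1 T_i)$ inherits elementarity from $\rho(\Gamma)$, each peripheral subgroup admits an explicit properly ending $\rho(\pi_1 T_i)$-equivariant model on a cuspidal neighborhood (a constant map, a horoball retraction, or a geodesic retraction, depending on the case). On a collar between a deep horoball and the rest of $M$ one smoothly interpolates between $D$ and the peripheral model; the resulting map is properly ending and $\rho$-equivariant, while the integral of the pullback volume form is controlled by the uniformly bounded Jacobian of the interpolation times the hyperbolic measure of the collar, which can be made arbitrarily small by pushing the truncation deeper into the cusps. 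Taking the infimum yields $\textup{Vol}(\rho)=0$, and the lemma follows. Should this statement already appear explicitly in~\cite{franc06:articolo}, the argument collapses to a direct citation.
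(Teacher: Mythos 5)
Your overall skeleton is exactly the paper's: the paper's proof is the one-line observation that elementary representations have zero volume, while $\lim_n\vol(\rho_n)=\vol(M)>0$, so all but finitely many $\rho_n$ are non-elementary. Your closing remark is on point -- the paper does treat ``elementary $\Rightarrow$ $\vol=0$'' as a known fact rather than reproving it -- so to the extent you defer to a citation, you agree with the paper.

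The self-contained argument you sketch, however, has a genuine gap in the case analysis. Since $\rho$ is not assumed discrete or faithful, an elementary representation whose image fixes exactly one point $\xi\in\partial_\infty\mathbb{H}^3$ need not be purely parabolic: the image can contain loxodromic elements fixing $\xi$ (think of reducible, upper-triangular representations into $PSL(2,\C)$ with non-unimodular diagonal part, which are precisely the elementary representations one most often meets). Such a group preserves no horosphere, no point of $\mathbb{H}^3$, and no geodesic, so none of your three retractions exists and your trichotomy does not cover it; the dichotomy ``fixes a unique boundary point $\Rightarrow$ purely parabolic'' is only valid for discrete images. To handle this case one needs a different construction -- e.g.\ a pseudo-developing map coning everything towards the fixed point $\xi$ (as in Dunfield and in~\cite{franc04:articolo}), or the observation that the image lies in the stabilizer of $\xi$, which is amenable, so the pulled-back bounded volume class vanishes and one invokes the equivalence of definitions. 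A secondary, lesser issue is your interpolation step: the claim that the Jacobian of the interpolating map stays uniformly bounded as the truncation is pushed into the cusp is plausible but not automatic and would need an explicit estimate (in cases (i) and (iii) you do not actually need it, since the constant map and the projection onto the invariant geodesic are already properly ending).
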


\begin{proof}
Elementary representations have zero volume and $\lim_{n \to \infty}\textup{Vol}(\rho_n) = \textup{Vol}(M)$, which is stricly positive. 
\end{proof}

With an abuse of notation we still denote the subsequence of the previous lemma by $\rho_n$. Since no $\rho_n$ is elementary we can consider the sequence of $\rho_n$-equivariant measurable maps $D_n: \partial_\infty \mathbb{H}^3 \rightarrow \partial_\infty \mathbb{H}^3$ and the corresponding sequence of BCG--natural maps $F_n:\mathbb{H}^3 \rightarrow \mathbb{H}^3$.

\begin{lem}
Up to conjugating $\rho_n$ by a suitable element $g_n \in \textup{Isom}(\mathbb{H}^3)$, we can suppose $F_n(O)=O$. 
\end{lem}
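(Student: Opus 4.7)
The plan is to exploit the naturality of the BCG construction with respect to conjugation. Specifically, I will show that if $F$ is the natural map of the non-elementary representation $\rho$ and $g \in \textup{Isom}(\mathbb{H}^3)$, then the natural map of the conjugate representation $g\rho g^{-1}$ is exactly $g \circ F$. Once this is established, transitivity of $\textup{Isom}(\mathbb{H}^3)$ on $\mathbb{H}^3$ lets me pick $g_n$ sending $F_n(O)$ to $O$ and replace $\rho_n$ by $g_n \rho_n g_n^{-1}$.

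To carry out the first step I would argue as follows. If $D \colon \partial_\infty \mathbb{H}^3 \to \partial_\infty \mathbb{H}^3$ is a $\rho$-equivariant measurable map, then the composition $D' := g \circ D$ satisfies
\[
D'(\gamma \theta) = g(\rho(\gamma) D(\theta)) = (g\rho(\gamma)g^{-1})\, D'(\theta),
\]
so $D'$ is a $(g\rho g^{-1})$-equivariant measurable boundary map, and is thus a legitimate choice for building the BCG natural map of $g\rho g^{-1}$. Since the Patterson--Sullivan family $\{\mu_x\}$ depends only on $\Gamma$ (not on the target representation), the corresponding pushforward measures satisfy
\[
\beta'_x = D'_*\mu_x = g_*(D_*\mu_x) = g_*\beta_x.
\]
The $\textup{Isom}(\mathbb{H}^3)$-equivariance of the barycentre (which holds because $\beta_x$ has no atom of mass $\geq 1/2$ by the previous lemma) then gives
\[
F'(x) = \textup{bar}_\mathcal{B}(g_*\beta_x) = g\bigl(\textup{bar}_\mathcal{B}(\beta_x)\bigr) = g(F(x)),
\]
as claimed.

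With this naturality in hand, the proof of the lemma is immediate: for each $n$, choose any isometry $g_n \in \textup{Isom}(\mathbb{H}^3)$ such that $g_n(F_n(O)) = O$ (this is possible since $\textup{Isom}(\mathbb{H}^3)$ acts transitively on $\mathbb{H}^3$). Replacing $\rho_n$ by $g_n \rho_n g_n^{-1}$, the associated BCG natural map becomes $g_n \circ F_n$, which sends $O$ to $O$.

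There is no real obstacle; the only subtlety worth pointing out is the well-definedness of the natural map of the conjugate representation. The equivariant boundary map is unique only up to a $\mu$-null set, so one should note that any two measurable equivariant boundary maps agree almost everywhere and produce the same pushforward measure $\beta'_x$, hence the same barycentre $F'(x)$. Also, since conjugation preserves volume and non-elementarity, the hypotheses $\textup{Vol}(g_n\rho_n g_n^{-1}) \to \textup{Vol}(M)$ and the non-elementarity of each $g_n\rho_n g_n^{-1}$ are preserved, so the normalization $F_n(O) = O$ may be assumed throughout the remainder of the argument without loss of generality.
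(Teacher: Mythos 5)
Your proposal is correct and follows essentially the same route as the paper: conjugating $\rho_n$ by $g$ post-composes the natural map with $g$ (which you justify via the boundary map, the pushforward measures and the equivariance of the barycentre), and then transitivity of $\textup{Isom}(\mathbb{H}^3)$ lets you choose $g_n$ with $g_n(F_n(O))=O$. The paper's proof is just a terser statement of this same naturality argument, so your added details (including the remark on uniqueness of the boundary map up to null sets) are a faithful elaboration rather than a different method.
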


\begin{proof}
Conjugating $\rho_n$ by $g$ reflects in post-composing $F_n$ with $g$. We can choose $g_n$ such $g_n(F_n(O))=O$. 
\end{proof}

The choice to fix the origin of $\mathbb{H}^3$ as the image of $F_n(O)$ is made to avoid pathological behaviour. For instance consider a sequence of loxodromic elements $g_n \in \textup{Isom}(\mathbb{H}^3)$ which is divergent and define the representations $\rho_n:=g_n \circ i \circ g_n^{-1}$, where $i:\Gamma \rightarrow \textup{Isom}(\mathbb{H}^3)$ is the standard lattice embedding. Clearly this sequence of representations satisfies $\lim_{n \to \infty} \textup{Vol}(\rho_n)=\textup{Vol}(M)$ since for every $n \in \N$ we have $\textup{Vol}(\rho_n)=\textup{Vol}(M)$. However, there does not exist any subsequence of $\rho_n$ converging to the holonomy of the manifold $M$. 

\begin{deft}
For any $n \in \N$ and every $x \in \mathbb{H}^3$ we can define self-adjoint operators $K_n$ and $H_n$
on $T_{F_n(x)}\mathbb{H}^3$ via the following implicit formulas:

\[
\langle K_n|_{F_n(x)} u, u \rangle= \int_{\partial_\infty \mathbb{H}^3} \nabla dB|_{(F_n(x),D_n(\theta))}(u,u)d\mu_x(\theta)
\]

\[
\langle H_n|_{F_n(x)} u, u \rangle= \int_{\partial_\infty \mathbb{H}^3} (dB|_{(F_n(x),D_n(\theta))}(u))^2d\mu_x(\theta)
\]
for any $u \in T_{F_n(x)}\mathbb{H}^3$. The notation $\langle \cdot , \cdot \rangle$ stands for the scalar product on $T_{F_n(x)}\mathbb{H}^3$ induced by the hyperbolic metric on $\mathbb{H}^3$ . 
\end{deft}

For sake of simplicity we are going to drop the subscripts in $K_n$ and $H_n$. Recall that, since both the domain and the target have the same dimension, the $3$-jacobian $Jac_3(F_n)$ coincides the modulus of the jacobian determinant $\det(D_xF_n)$. As stated in~\cite[Lemma 5.4]{besson96:articolo}, the following inequality holds for every $x \in \mathbb{H}^3$

\[
|\det(D_xF_n)| \leq \left(\frac{4}{3}\right)^\frac{3}{2}\frac{\det(H_n)^\frac{1}{2}}{\det(K_n)}.
\]

\begin{lem}\label{almost_everywhere}
Suppose $\lim_{n \to \infty} \textup{Vol}(\rho_n)=\textup{Vol}(M)$. Hence we have that $|\det(D_xF_n)|$ converges to $1$ almost everywhere in $\mathbb{H}^3$ with respect to the measure induced by the standard metric.
\end{lem}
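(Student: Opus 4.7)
The plan is to use the $\varepsilon$-natural maps from Section~\ref{epsilon} to sandwich the integral $\int_M|\det D_xF_n|\,d\mathrm{vol}$ between $\textup{Vol}(\rho_n)$ and $\textup{Vol}(M)$, deduce $L^1(M)$-convergence of $|\det D_xF_n|$ to $1$, and then extract an a.e.\ converging subsequence and lift it to $\mathbb{H}^3$ via $\Gamma$-equivariance.

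For the first step, fix $\varepsilon>0$. Since $F_n^\varepsilon$ is smooth, $\rho_n$-equivariant and properly ending, the definition of the volume of a representation yields $\textup{Vol}(\rho_n)\leq \int_M\sqrt{|\det((F_n^\varepsilon)^*g_{\mathbb{H}^3})|}\,d\mathrm{vol}$, and in the equidimensional case this integrand equals $|\det D_xF_n^\varepsilon|$. Letting $\varepsilon\to 0$ and using the pointwise convergence $D_xF_n^\varepsilon\to D_xF_n$ together with the uniform bound $|\det D_xF_n^\varepsilon|\leq 1+\varepsilon$, dominated convergence on the finite-measure space $M$ gives
\[
\textup{Vol}(\rho_n)\leq \int_M |\det D_xF_n|\,d\mathrm{vol}\leq \textup{Vol}(M),
\]
the upper bound being the BCG estimate $|\det D_xF_n|=Jac_3(F_n)\leq 1$, which is precisely where the hypothesis $k\geq 3$ enters.

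The assumption $\textup{Vol}(\rho_n)\to \textup{Vol}(M)$ now forces $\int_M(1-|\det D_xF_n|)\,d\mathrm{vol}\to 0$. Since the integrand is non-negative and dominated by $1$ on a set of finite measure, this is $L^1(M)$-convergence, hence convergence in measure; extracting a subsequence (and retaining the index $n$ as is customary in this section) we obtain $|\det D_xF_n|\to 1$ almost everywhere on $M$. Finally, since elements of $\textup{Isom}(\mathbb{H}^3)$ have Jacobian of modulus one, the $\rho_n$-equivariance of $F_n$ implies that $x\mapsto |\det D_xF_n|$ is $\Gamma$-invariant and descends to $M$, so a.e.\ convergence on $M$ lifts to a.e.\ convergence on $\mathbb{H}^3$. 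The only genuinely technical point is the exchange of the $\varepsilon$-limit with the integral over $M$, which is handled by dominated convergence using the uniform Jacobian bound for the $\varepsilon$-natural maps; everything else is a direct combination of the BCG inequality with the finiteness of $\textup{Vol}(M)$.
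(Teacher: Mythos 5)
Your proof is correct and follows the paper's argument essentially verbatim: the same sandwich $\textup{Vol}(\rho_n)\leq \int_M|\det(D_xF_n^\varepsilon)|\,d\textup{vol}$ with dominated convergence as $\varepsilon\to 0$, the BCG bound $|\det(D_xF_n)|\leq 1$, and the conclusion from $\int_M|\det(D_xF_n)|\,d\textup{vol}\to\textup{Vol}(M)$. You are in fact slightly more careful than the paper, which jumps from this $L^1$-statement directly to the almost everywhere conclusion without mentioning the subsequence extraction (convergence in measure only gives a.e.\ convergence along a subsequence) or the $\Gamma$-invariance of $x\mapsto|\det(D_xF_n)|$ used to pass between $M$ and $\mathbb{H}^3$, both of which you make explicit.
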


\begin{proof}
Denote by $F_n^\varepsilon: \mathbb{H}^3 \rightarrow \mathbb{H}^3$ the $\varepsilon$-natural maps introduced in Section~\ref{epsilon}. Recall that we have the following estimate

\[
\textup{Vol}(\rho_n) \leq \int_M |\det(D_xF_n^\varepsilon)|d \textup{vol}_{\mathbb{H}^3}(x)=\textup{Vol}(F_n^\varepsilon)
\]
and since $|\det(D_xF^\varepsilon_n)| \leq 1+\varepsilon$ and $\lim_{\varepsilon \to 0} D_xF^\varepsilon_n=D_xF_n$, by the theorem of dominated convergence we get

\[
\textup{Vol}(\rho_n) \leq \int_M |\det(D_xF_n)| d\textup{vol}_{\mathbb{H}^3}(x) \leq \textup{Vol}(M)
\]
from which follows the statement. 
\end{proof}

If $\mathcal{N}$ is the set of zero measure outside of which $|\det(D_xF_n)|$ is converging, for every $x \in \mathbb{H}^3 \setminus \mathcal{N}$ and fixed $\varepsilon>0$ there must exist $n_0=n_0(\varepsilon,x)$ such that $|\det(D_xF_n)| \geq 1-\varepsilon$ for every $n > n_0$. Thus it holds

\[
\left(\frac{4}{3}\right)^\frac{3}{2}\frac{\det(H_n)^\frac{1}{2}}{\det(K_n)} > 1-\varepsilon
\]
from which we can deduce

\[
\frac{\det(H_n)}{(\det(K_n))^2} > \left(\frac{3}{4}\right)^3(1-\varepsilon)^2 > \left(\frac{3}{4}\right)^3(1-2\varepsilon).
\]

Moreover, since $\mathbb{H}^3$ has costant sectional curvature equal to $-1$, we have $K_n=I-H_n$ (see~\cite[Section 5.b]{besson95:articolo}). Here $I$ stands for the identity on $T_{F_n(x)}\mathbb{H}^3$. Hence, by substituting the expression of $K_n$ in the previous inequality, we get 

\[
\frac{\det(H_n)}{(\det(I-H_n))^2} > \left(\frac{3}{4}\right)^3(1-2\varepsilon).
\]

Consider now the set of positive definite symmetric matrices of order $3$ with real entries and trace equal to $1$, namely

\[
Sym^+_1(3,\R):=\{ H \in Sym(3,\R)| H>0, \textup{Tr}(H)=1\}.
\]

Once we have fixed a basis of $T_{F_n(x)}\mathbb{H}^3$, we can identify $H_n$ and $K_n$ with the matrices representing these bilinear forms with respect to the fixed basis. Under this assumption, recall that $H_n \in Sym^+_1(3,\R)$ for every $n \in \N$, as shown in~\cite[Proposition B.1]{besson96:articolo}. If we define 

\begin{equation}
  \label{eq:BCGAppB}
\psi:Sym^+_1(3,\R) \rightarrow \R, \hspace{10pt} \psi(H)=\frac{\det(H)}{(\det(I-H))^2},  
\end{equation}
we know that 
\begin{equation}
  \label{eq:BCGAppBbis}
\psi(H) \leq \left(\frac{3}{4}\right)^3  
\end{equation}

and the equality holds if and only if $H=I/3$ (see~\cite[Appendix B]{besson95:articolo}).
\begin{oss}\label{rem:BCGAppB}
Note that if $k=2$, then $\psi(H)$ is unbounded on the space of symmetric positive definite
matrices with trace equal to $1$. This is the reason why BCG method fails (as expected) in the case of surfaces.  
\end{oss}

It is worth noticing that the space $Sym^+_1(3,\R)$ is not compact and a priori there could exist a sequence of elements $H_n \in Sym^+_1(3,\R)$ such that $$\lim_{n\to \infty} \psi(H_n)=\left(\frac{3}{4}\right)^3.$$

We are going to show that this is impossible.

\begin{prop}\label{maximum}
Suppose to have a sequence $H_n \in Sym^+_1(3,\R)$ such that 
\[
\lim_{n\to \infty} \psi(H_n)=\left(\frac{3}{4}\right)^3.
\]

Hence the sequence $H_n$ must converge to $I/3$. 
\end{prop}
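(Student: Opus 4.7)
The plan is to exploit the fact that, although $Sym^+_1(3,\R)$ is not compact, its closure $K$ in the ambient space of symmetric matrices consists of all positive semidefinite matrices of trace $1$ and is compact (closed and bounded, since all such matrices have eigenvalues in $[0,1]$). By compactness, $\{H_n\}$ admits subsequential limits in $K$; a standard subsubsequence argument shows that if every such limit equals $I/3$, then the whole sequence $H_n$ converges to $I/3$.

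Let $H_\infty \in K$ be a subsequential limit. If $H_\infty$ is positive definite, then $H_\infty \in Sym^+_1(3,\R)$ and by continuity of $\psi$ we get $\psi(H_\infty)=(3/4)^3$; the uniqueness of the maximum of $\psi$, recalled from \cite[Appendix B]{besson95:articolo}, forces $H_\infty=I/3$. The remaining case is that $H_\infty$ has a vanishing eigenvalue, and here I would derive a contradiction with $\psi(H_n)\to (3/4)^3=27/64$. Order the eigenvalues of $H_n$ as $a_n\leq b_n\leq c_n$ (with $a_n+b_n+c_n=1$) and use the spectral expression
\[
\psi(H_n)=\frac{a_n b_n c_n}{\bigl((a_n+b_n)(a_n+c_n)(b_n+c_n)\bigr)^2}.
\]

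If only $a_n\to 0$ while $b_n,c_n$ remain bounded away from $0$, the numerator vanishes while the denominator stays bounded below, so $\psi(H_n)\to 0$, contradicting the hypothesis. The subtle case, and the one I expect to be the main obstacle, is $a_n,b_n\to 0$ and $c_n\to 1$: both the numerator and the factor $(a_n+b_n)^2$ in the denominator collapse, so the limit of $\psi(H_n)$ is a priori indeterminate. An elementary AM--GM estimate breaks the tie: since $(a_n+c_n),(b_n+c_n)\to 1$ and $a_nb_n\leq (a_n+b_n)^2/4$, we obtain $\psi(H_n)\leq c_n/4+o(1)\leq 1/4+o(1)$, which contradicts $\psi(H_n)\to 27/64$ because $27/64>1/4$. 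Having excluded every degenerate subsequential limit, every limit of the sequence equals $I/3$, and compactness of $K$ upgrades this to $H_n\to I/3$.
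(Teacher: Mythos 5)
Your proposal is correct and follows essentially the same route as the paper: both reduce to the ordered eigenvalues $(a,b,c)$ with $a+b+c=1$, note that $\psi\to 0$ when exactly one eigenvalue degenerates and that $\psi\le 1/4+o(1)$ (via $ab\le (a+b)^2/4$) when two eigenvalues degenerate, and conclude from $1/4<(3/4)^3$ that the sequence cannot escape to the boundary, so continuity and uniqueness of the maximum force convergence to $I/3$. The only difference is packaging: you argue by subsequential limits in the compact set of positive semidefinite trace-one matrices, whereas the paper works with the induced function on the closed $2$-simplex of eigenvalues modulo permutation; the estimates are identical.
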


\begin{proof}
We start by observing that the function $\psi$ is invariant by conjugation for an element $g \in GL(3,\R)$. Indeed, $\psi(H)$ can be expressed as $\psi(H)=p_H(0)/(p_H(1))^2$, where $p_H$ is the characteristic polynomial of $H$. Hence the claim follows. In particular, we have an induced function 

\[
\tilde \psi : O(3,\R) \backslash Sym^+_1(3,\R) \rightarrow \R, \hspace{10pt} \tilde \psi(\bar H)=\psi(H),
\]
where $\bar H$ denotes the equivalence class of the matrix $H$ and the orthogonal group $O(3,\R)$ acts on $Sym^+_1(3,\R)$ by conjugation. We can think of the space $O(3,\R) \backslash Sym^+_1(3,\R)$ as the interior $\mathring \DD_2$ of the standard 2-simplex quotiented by the action of the symmetric group $\mathfrak{S}_3$ which permutes the coordinate of an element $(\lambda_1,\lambda_2,\lambda_3) \in \mathring \DD_2$. An explicit homeomorphism between the two spaces is given by

\[
\Lambda: O(3,\R) \backslash Sym^+_1(3,\R) \rightarrow \mathfrak{S}_3 \backslash \mathring \DD_2, \hspace{10pt} \Lambda(\bar H):=[\lambda_1(H),\lambda_2(H),\lambda_3(H)],
\] 
where $\lambda_i(H)$ for $i=1,2,3$ are the eigenvalues of $H$. By defining $\Psi=\psi \circ \Lambda^{-1}$, we can express this function as

\[
\Psi: \mathfrak{S}_3 \backslash \mathring \DD_2 \rightarrow \R, \hspace{10pt} \Psi([a,b,c])=\frac{abc}{((1-a)(1-b)(1-c))^2}. 
\]

We are going to think of $\Psi$ as defined on $\mathring \DD_2$ and we are going to estimate this function on the boundary of $\DD_2$. Since $a+b+c=1$, with an abuse of notation we will write 

\[
\Psi(a,b)=\frac{ab(1-a-b)}{((1-a)(1-b)(a+b))^2}. 
\]
identifying $\mathring \DD_2$ with the interior of the triangle $\tau$  in $\R^2$ with vertices $(0,0)$, $(1,0)$ and $(0,1)$. If a sequence of points is converging to a boundary point of $\DD_2$, then we have a sequence $(a_n,b_n)$ of points converging to a boundary point of $\tau$. If the limit point is not a vertex of $\tau$ then $\lim_{n \to \infty} \Psi(a_n,b_n) = 0$. For instance, suppose $\lim_{n \to \infty} (a_n,b_n)=(\alpha,0)$ with $\alpha \neq 0,1$. Hence 

\[
\lim_{n \to \infty} \Psi(a_n,b_n)=\lim_{n \to \infty} \frac{a_nb_n(1-a_n-b_n)}{((1-a_n)(1-b_n)(a_n+b_n))^2}=0
\]  
as claimed. For the other boundary points which are not vertices, the computation is the same. The delicate points are given by the vertices $(0,0)$, $(1,0)$ and $(0,1)$. On these points the function $\Psi$ cannot be continuously extended. However we can uniformly bound the possible limit values. Suppose to have a sequence $(a_n,b_n)$ such that $\lim_{n \to \infty} (a_n,b_n)=(0,0)$. We have

\[
\Psi(a_n,b_n)=\frac{a_nb_n(1-a_n-b_n)}{((1-a_n)(1-b_n)(a_n+b_n))^2} \sim \frac{a_nb_n}{(a_n+b_n)^2} \leq \frac{1}{4}.
\]  
where the symbol $\sim$ denotes that the sequence on the left has the same behaviour of the sequence of the right in a neighborhood of $(0,0)$. Analogously, if $\lim_{n \to \infty} (a_n,b_n)=(1,0)$ then

\[
\Psi(a_n,b_n)=\frac{a_nb_n(1-a_n-b_n)}{(1-a_n)(1-b_n)(a_n+b_n)}\sim \frac{b_n}{1-a_n}\left(1-\frac{b_n}{1-a_n}\right) \leq \frac{1}{4}.
\]
and the same for $\lim_{n \to \infty} (a_n,b_n)=(0,1)$. The previous computation proves that $\Psi$ is uniformly bounded by $1/4$ on the boundary of $\tau$, hence on the boundary of $\DD_2$. Equivalently $\psi$ is bounded by $1/4$ in a suitable neighborhood at infinity of $Sym^+_1(3,\R)$, from which follows the statement.
\end{proof}

We know that in our context we have

\[
\left(\frac{3}{4}\right)^3(1-2\varepsilon) \leq \psi(H_n) \leq \left(\frac{3}{4}\right)^3
\]
for $n \geq n_0$. As a consequence of Proposition~\ref{maximum}, the sequence $H_n$ must converge to $I/3$. Hence $H_n$ converges to $I/3$ almost-everywhere on $\mathbb{H}^3$. We are going to prove that this implies the uniform convergence of $H_n$ to $I/3$ on compact sets. Before doing this we recall these two lemmas which can be found in~\cite[Section 7]{besson95:articolo}.

\begin{lem}\label{kappa1}
Let $x,x' \in \mathbb{H}^3$ such that the maximum eigenvalue of $H_n$ satisfies $\lambda_n \leq 2/3$ at every point of the geodesic joining $x$ to $x'$. Then there exists a positive constant $C_1$ such that

\[
d(F_n(x),F_n(x')) \leq C_1d(x,x').
\]
\end{lem}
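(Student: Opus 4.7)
The plan is to prove a pointwise operator-norm bound $\|D_xF_n\|_{\mathrm{op}} \leq C_1$ under the eigenvalue hypothesis and then integrate along the geodesic segment joining $x$ to $x'$ to upgrade this to the desired distance estimate.

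First I would exploit the differentiated equilibrium identity recorded at the end of the preliminaries: taking $v = D_xF_n(u)$ in
\begin{align*}
&\int_{\partial_\infty \mathbb{H}^3} \nabla dB|_{(F_n(x),D_n(\theta))}(D_xF_n(u),v)\,d\mu_x(\theta) \\
&\qquad = \delta(\Gamma)\int_{\partial_\infty \mathbb{H}^3} dB|_{(F_n(x),D_n(\theta))}(v)\,dB|_{(x,\theta)}(u)\,d\mu_x(\theta),
\end{align*}
the left-hand side is exactly $\langle K_n D_xF_n(u), D_xF_n(u)\rangle$ by definition of $K_n$. Applying Cauchy--Schwarz in $L^2(\mu_x)$ on the right-hand side, together with the trivial bound $|dB|_{(x,\theta)}(u)|\leq \|u\|$ coming from the fact that Busemann functions are $1$-Lipschitz, yields
\[
\langle K_n D_xF_n(u), D_xF_n(u)\rangle \leq \delta(\Gamma)\,\langle H_n D_xF_n(u), D_xF_n(u)\rangle^{1/2}\,\|u\|.
\]

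Next I would invoke the curvature identity $K_n = I - H_n$, valid because the target has constant sectional curvature $-1$. The hypothesis $\lambda_n \leq 2/3$ then gives $K_n \geq (1/3)I$ and $H_n \leq (2/3)I$ pointwise, so the previous inequality reduces to
\[
\tfrac{1}{3}\|D_xF_n(u)\|^2 \leq \delta(\Gamma)\sqrt{\tfrac{2}{3}}\,\|D_xF_n(u)\|\,\|u\|.
\]
Dividing by $\|D_xF_n(u)\|$ (the inequality is trivial when it vanishes) produces $\|D_xF_n(u)\|\leq C_1 \|u\|$ with $C_1 = 3\delta(\Gamma)\sqrt{2/3}$, an explicit constant since $\delta(\Gamma)=2$.

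Finally, parametrising the geodesic $\gamma : [0, d(x,x')]\to \mathbb{H}^3$ from $x$ to $x'$ by arc length and using the pointwise bound along $\gamma$ (which stays in the eigenvalue region by assumption), one integrates to obtain
\[
d(F_n(x), F_n(x')) \leq \int_0^{d(x,x')} \|D_{\gamma(t)}F_n(\dot\gamma(t))\|\,dt \leq C_1\,d(x,x').
\]
The only delicate point is the Cauchy--Schwarz step: one could sharpen $C_1$ by using the identity $\int dB|_{(x,\theta)}(u)^2\,d\mu_x(\theta) = \tfrac{1}{3}\|u\|^2$ coming from the spherical symmetry of the visual measure $\mu_x$, but any constant bound suffices for the statement. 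Everything else is a routine geodesic integration.
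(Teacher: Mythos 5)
Your proof is correct and follows essentially the same route as the source the paper cites (BCG, Section 7) and as the computation the paper itself carries out right after the lemma: polarize the implicit equation defining $F_n$, apply Cauchy--Schwarz to get $\langle K_n D_xF_n(u),D_xF_n(u)\rangle \leq \delta(\Gamma)\langle H_n D_xF_n(u),D_xF_n(u)\rangle^{1/2}\|u\|$, use $K_n=I-H_n$ with the eigenvalue hypothesis to obtain a pointwise Lipschitz bound, and integrate along the geodesic. The closing remark about $\int (dB|_{(x,\theta)}(u))^2\,d\mu_x(\theta)=\|u\|^2/3$ is also fine (and inessential), since for a lattice $\mu_x$ is the visual measure at $x$.
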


\begin{lem}\label{kappa2}
Let $x,x' \in \mathbb{H}^3$. Let $P$ be the parallel transport from $F_n(x)$ to $F_n(x')$ along the geodesic which joins the two points. Denote by $H_n(x)$ the endomorphism defined on $T_{F_n(x)}\mathbb{H}^3$. Then there exists a positive constant $C_2$ such that

\[
||H_n(x) - H_n(x') \circ P|| \leq C_2(d(x,x')+d(F_n(x),F_n(x'))).
\]

The norm which appears above is the one obtained by thinking of each endomorphism as an operator between Euclidean vector spaces. 
\end{lem}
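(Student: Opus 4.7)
The plan is to compare $H_n(x)$ with the operator on $T_{F_n(x)}\mathbb H^3$ obtained by conjugating $H_n(x')$ by the parallel transport $P$ (so that both act on the same tangent space, which is how I read the statement). Writing the quadratic form associated to either endomorphism as an integral over $\partial_\infty \mathbb H^3$, the difference splits naturally into two pieces: one that accounts for the variation of the Patterson--Sullivan measure as the source point moves from $x$ to $x'$, and one that accounts for the variation of the Busemann integrand as the target basepoint moves from $F_n(x)$ to $F_n(x')$.

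First I would use the cocycle identity for the Patterson--Sullivan family to write
$$d\mu_{x'}(\theta) = e^{-\delta(\Gamma) B_x(x',\theta)} d\mu_x(\theta),$$
and observe that $|B_x(x',\theta)| \leq d(x,x')$ by the $1$-Lipschitz property of Busemann functions. A first-order expansion of the exponential therefore produces a factor of the form $1 + O(d(x,x'))$ uniformly in $\theta$, and since $\mu_x$ is a probability measure and $|dB|\leq 1$, the contribution of this change-of-measure term to the difference of quadratic forms is bounded by a universal multiple of $d(x,x')\,\|u\|^2$.

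Second, for a fixed $\theta$, I would compare the pointwise integrands $(dB|_{(F_n(x),D_n(\theta))}(u))^2$ and $(dB|_{(F_n(x'),D_n(\theta))}(Pu))^2$. The gradient field $\nabla B(\cdot, \xi)$ is smooth with covariant derivative $\nabla dB(\cdot,\xi) = g_{\mathbb H^3} - dB(\cdot,\xi)\otimes dB(\cdot,\xi)$, whose norm is bounded by a universal constant independent of $\xi$ (this is just the shape operator of horospheres in constant curvature $-1$). Integrating along the geodesic from $F_n(x)$ to $F_n(x')$ and using that $P$ is an isometry preserving the parallel gradient field along this geodesic, one obtains a pointwise bound $C'\, d(F_n(x), F_n(x'))\,\|u\|^2$, again uniformly in $\theta$.

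Adding the two contributions, integrating against $\mu_x$, and polarizing to recover the operator norm from the associated quadratic form, I obtain the claimed inequality with some constant $C_2$ that depends only on $\delta(\Gamma)=2$ and on universal hyperbolic curvature bounds. The main obstacle is uniformity in $n$: one has to check that neither the equivariant boundary map $D_n$ nor the representation $\rho_n$ enters quantitatively into the estimates. This is indeed the case, since $D_n$ appears only through its image $D_n(\theta) \in \partial_\infty \mathbb H^3$, and all bounds on $dB$ and $\nabla dB$ are independent of which boundary point one chooses. A secondary subtlety is the interpretation of the composition $H_n(x')\circ P$, which strictly speaking lands in $T_{F_n(x')}\mathbb H^3$; one reads the operator norm after identifying the two tangent spaces through $P$, which is the canonical Euclidean identification.
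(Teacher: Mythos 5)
Your argument is correct and is essentially the proof the paper relies on: Lemma~\ref{kappa2} is not proved in the paper but quoted from Section~7 of Besson--Courtois--Gallot, where the estimate is obtained by exactly your decomposition into the Patterson--Sullivan cocycle term (using $|B_x(x',\theta)|\le d(x,x')$, uniform in $\theta$ and in $n$) and the variation of $dB|_{(\cdot,D_n(\theta))}$ along the geodesic from $F_n(x)$ to $F_n(x')$, controlled by the universal bound on $\nabla dB$ in curvature $-1$. The only point worth making explicit is that the linearization $e^{\delta(\Gamma)d(x,x')}-1=O(d(x,x'))$ is uniform only for bounded $d(x,x')$, say $d(x,x')\le 1$; for larger distances the claimed inequality is anyway trivial, since $H_n$ is positive semidefinite of trace $1$, hence $\|H_n(x)-H_n(x')\circ P\|\le 2$, which is absorbed by $C_2\,d(x,x')$.
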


\begin{prop}
Suppose the sequence $H_n$ converges almost everywhere to $I/3$. Thus it converges uniformly to $I/3$ on every compact set of $\mathbb{H}^3$. 
\end{prop}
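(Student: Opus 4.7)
The plan is to upgrade almost-everywhere convergence to uniform convergence on compact sets by combining Lemmas~\ref{kappa1} and~\ref{kappa2} into a local equi-Lipschitz estimate for the family $\{H_n\}$, and then to exploit pointwise convergence on a dense set. Since the bad set has null measure, the set $\mathcal G := \{x \in \mathbb{H}^3 : H_n(x) \to I/3\}$ is dense; the strategy is to show that around every $x_0 \in \mathcal G$ there exists a ball $B(x_0, r_0)$ of uniform radius $r_0 > 0$ on which $H_n \to I/3$ uniformly, and then to cover any compact $K$ by finitely many such balls.

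The key step is a bootstrap argument producing the uniform radius $r_0$. Fix $x_0 \in \mathcal G$; for $n$ large the maximal eigenvalue of $H_n(x_0)$ is within, say, $1/12$ of $1/3$, and in particular strictly smaller than $2/3$. Let $R_n$ denote the supremum of the radii $r$ such that the maximal eigenvalue of $H_n$ remains $\leq 2/3$ on $B(x_0, r)$; continuity of $H_n$ ensures $R_n > 0$. Inside $B(x_0, R_n)$, Lemma~\ref{kappa1} supplies a $C_1$-Lipschitz bound for $F_n$ along geodesics emanating from $x_0$, and Lemma~\ref{kappa2} then yields
\[
\|H_n(x) - H_n(x_0) \circ P\| \leq C_2(1 + C_1)\, d(x, x_0)
\qquad \text{for every } x \in B(x_0, R_n).
\]
Choose $r_0 > 0$, depending only on $C_1$ and $C_2$, so that $C_2(1 + C_1) r_0 < 1/6$. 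Since parallel transport preserves eigenvalues, a standard perturbation argument bounds the maximal eigenvalue of $H_n(x)$ by $5/12 + 1/6 = 7/12 < 2/3$ on $\overline{B(x_0, R_n)}$ whenever $R_n \leq r_0$. By continuity of $H_n$, this strict inequality extends the bound to a slightly larger ball, contradicting the definition of $R_n$. Hence $R_n \geq r_0$ for all $n$ sufficiently large, and the displayed estimate extends to the whole ball $B(x_0, r_0)$.

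On $B(x_0, r_0)$ the family $\{H_n\}$ is therefore equi-Lipschitz with constant $C_2(1 + C_1)$ modulo parallel transport. Because $I/3$ is a scalar multiple of the identity, and hence preserved by parallel transport, and $H_n(x_0) \to I/3$, the triangle inequality yields $H_n \to I/3$ uniformly on $B(x_0, r_0)$. To conclude, cover any compact $K \subset \mathbb{H}^3$ by finitely many balls $B(x_i, r_0)$ with $x_i \in \mathcal G$ (possible by density of $\mathcal G$); taking the maximum over these finitely many balls yields uniform convergence on $K$.

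The main obstacle is precisely the bootstrap: without a radius $r_0$ independent of $n$, the hypothesis of Lemma~\ref{kappa1} on the eigenvalues could fail as $n$ grows and the equi-Lipschitz constant for $\{H_n\}$ would blow up. The interlocking of the two lemmas — eigenvalue control feeding Lipschitz control of $F_n$, which via Lemma~\ref{kappa2} feeds back into Lipschitz control of $H_n$, sustaining the eigenvalue control on a larger ball — is what drives the argument.
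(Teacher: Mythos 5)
Your bootstrap is sound, and it is the right device for securing the hypothesis of Lemma~\ref{kappa1} on balls of a radius $r_0$ independent of $n$: the first-exit definition of $R_n$, the choice $C_2(1+C_1)r_0<1/6$, and the eigenvalue-perturbation step (parallel transport being an isometry) all work. This is also a genuinely different organization from the paper, which instead applies Egorov's theorem on a large closed ball and runs a contradiction argument with the same two lemmas. The genuine gap is in your last step. From $\|H_n(x)-H_n(x_0)\circ P\|\le C_2(1+C_1)\,d(x,x_0)$ on $B(x_0,r_0)$ and $H_n(x_0)\to I/3$ you conclude ``by the triangle inequality'' that $H_n\to I/3$ uniformly on $B(x_0,r_0)$. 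That does not follow: the triangle inequality only gives $\|H_n(x)-I/3\|\le\|H_n(x_0)-I/3\|+C_2(1+C_1)\,d(x,x_0)$, whose supremum over the ball tends to $C_2(1+C_1)r_0$, a fixed positive constant (about $1/6$), not to $0$. Equi-Lipschitz control plus convergence at the single center $x_0$ yields uniform smallness of order $r_0$, not uniform convergence, and $r_0$ was fixed once and for all in terms of $C_1,C_2$, so it cannot be sent to $0$ without revisiting the covering.

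The repair is easy with the tools you already set up, in either of two ways. First, since the eigenvalue bound $7/12<2/3$ holds on the whole (convex) ball $B(x_0,r_0)$, Lemmas~\ref{kappa1} and~\ref{kappa2} apply to every pair of points of that ball, so $\{H_n\}$ is equi-Lipschitz there; combining this equicontinuity with pointwise convergence on the dense full-measure set $\mathcal{G}\cap B(x_0,r_0)$ gives, by the standard argument, uniform convergence on $\overline{B(x_0,r_0/2)}$, and your covering argument then finishes the proof. Alternatively, given a target accuracy $\varepsilon>0$, cover the compact set by balls centered at points of $\mathcal{G}$ of radius $\min\{r_0,\ \varepsilon/(2C_2(1+C_1))\}$: the bootstrap still supplies the eigenvalue control on the larger radius-$r_0$ balls, so your displayed estimate applies on the smaller balls and yields $\|H_n(x)-I/3\|<\varepsilon$ for all large $n$. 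As written, however, the assertion that $H_n\to I/3$ uniformly on the fixed ball $B(x_0,r_0)$ is unjustified, so the proof is incomplete at precisely that point.
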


\begin{proof}
We will follow the same proof of~\cite[Lemma 7.5]{besson95:articolo}. Without loss of generality we may reduce ourselves to the case of a closed ball $\overline{B_r(O)}$ around the origin of the Poincar\'e model of $\mathbb{H}^3$. Since $H_n$ is converging almost everywhere to $I/3$ on $\mathbb{H}^3$, hence in particular on $\overline{B_r(O)}$, by Egorov theorem, given a fixed $\eta >0$ there will exist a compact set $K$ and $N \in \N$ such that $\textup{Vol}(\overline{B_r(O)} \setminus K) < \eta$ and 

\[
||H_n(x) - I/3||< \epsilon
\]
for every $n \geq N$ and every $x \in K$. Moreover we can assume that the set $\overline{B_r(O)} \setminus K$ is sufficiently small not to contain any ball of radius $\epsilon$, for $\epsilon >0$.  This assumption implies that for every $x \in \overline{B_r(O)}$ we must have $d(x,K)<\epsilon$. Fix now $\epsilon$, $K$ and a suitable value $n \geq N$ so that 

\[
||H_n(x) - I/3|| < \epsilon 
\]
for every $x \in K$. As in Lemma~\ref{kappa2} we will write $H_n(x)$ to denote the endomorphism $H_n$ defined on $T_{F_n(x)}\mathbb{H}^3$. By contradiction, suppose the statement is false. There must exist two points $x'_n \in \overline{B_r(O)}$ and $x_n \in K$ so that $d(x_n,x_n')<\epsilon$ and $||H_n(x_n')-I/3||>C_3 \epsilon$, where we can assume

\[
\frac{1}{3 \epsilon} \geq C_3 \geq C_2(C_1+1)+1
\]
and $C_1$ and $C_2$ are the constants introduced in the previous lemmas.\\
\indent The continuity of the function $x \to H_n(x)$ implies the existence of a point $x''_n$ contained in the geodesic segment $[x_n,x_n']$ such that $||H_n(x_n'')-I/3||=C_3 \epsilon$. This implies that the maximum eigenvalue of $H_n$ satisfies $\lambda_n \leq 2/3$ at every point of the geodesic segment $[x_n,x_n'']$. By applying Lemma~\ref{kappa1} and Lemma~\ref{kappa2} we get that 

\[
||H_n(x_n) - H_n(x_n'')\circ P|| \leq C_2(C_1+1)\epsilon,
\]
where $P$ is the parallel transport from $F_n(x_n)$ to $F_n(x_n'')$ along the geodesic segment joining them. Since $||H_n(x_n)-I/3|| < \epsilon$ we get a contradiction.
\end{proof}

Thus, if we consider a closed ball $\overline{B_r(O)}$ with $r>0$, there exists $n_1=n_1(\varepsilon,r)$ such that for $n>n_1$ we have the following estimates

\[
|2/3 \langle  D_xF_n (v),u \rangle| - \varepsilon <  |\langle K_n \circ D_xF_n(v),u \rangle|, \hspace{10pt} \langle H_n u , u \rangle^\frac{1}{2} < ||u||/\sqrt{3} + \varepsilon.
\]

As a consequence of the Cauchy--Schwarz inequality, we can write

\[
| \langle K_n \circ D_xF_n (v),u \rangle| \leq 2 (\langle H_n(u),u \rangle)^{\frac{1}{2}}(\int_{\partial_\infty \mathbb{H}^3} (dB|_{(x,\theta)} (v))^2 d\mu_x(\theta))^\frac{1}{2},
\]
for every $v \in T_x\mathbb{H}^3$ and $u \in T_{F_n(x)}\mathbb{H}^3$. Hence by taking $n>n_1$ we get

\[
|2/3 \langle D_xF_n(v),u \rangle| - \varepsilon  \leq 2(||u||/\sqrt{3} + \varepsilon ) (\int_{\partial_\infty \mathbb{H}^3} (dB|_{(x,\theta)} (v))^2 d\mu_x(\theta))^\frac{1}{2}.
\]

Recall that $||dB||^2=1$. By considering on both sides the supremum on all the vectors $u$ of norm equal to $1$ we get

\[
||D_xF_n(v)|| < \sqrt{3} ||v|| + 3\varepsilon(||v||+1/2)
\]

Again, by taking the supremum on all the vectors $||v||=1$ we get

\[
||D_xF_n|| < \sqrt{3} + 9/2\varepsilon
\]
hence $||D_xF_n||$ is uniformly bounded on $\overline{B_r(O)}$ for any $n>n_1$ and for any choice of $r>0$. We are now ready to prove Theorem~\ref{convergence}.

\begin{proof}
Since we know that $\lim_{n \to \infty} \textup{Vol}(\rho_n)=\textup{Vol}(M)$, the previous computations shows that $||D_xF_n||$ must be eventually uniformly bounded on every compact set of $\mathbb{H}^3$. Let $x \in \mathbb{H}^3$ be any point and let $\gamma \in \Gamma$. Let $c$  be the geodesic joining $x$ to $\gamma x$. Denote by $L=d(x,\gamma x)$ so that the interval $[0,L]$ parametrizes the curve $c$. Consider a closed ball $\overline{B_r(O)}$ sufficiently large to contain in its interior both $x$ and $\gamma x$. On this ball there must exist a constant $C$ such that $||D_xF_n|| < C$ for $n$ bigger than a suitable value $n_0$. Thus, it holds

\[
d(F_n(x),F_n(\gamma x)) \leq \int_0^L ||D_{c(t)}F_n(\dot c(t))|| dt \leq \int_0^L ||D_{c(t)}F_n||dt \leq Cd(x,\gamma x).
\]

Recall that given an element $g \in \textup{Isom}(\mathbb{H}^3)$ its translation length is defined as $\mathfrak{L}_{\mathbb{H}^3}(g):=\inf_{y \in \mathbb{H}^3}d(gy,y)$. The previous estimate implies that the translation length of the element $\rho_n(\gamma)$ can be bounded by

\[
\mathfrak{L}_{\mathbb{H}^3}(\rho_n(\gamma)) \leq d(\rho_n(\gamma)F_n(x),F_n(x)) \leq Cd(\gamma x,x)
\]
and hence the sequence $\rho_n$ is bounded in the character variety $X(\Gamma,\textup{Isom}(\mathbb{H}^3))$. Moreover the choice made before to fix $F_n(O)=O$ guarantees that the sequence $\rho_n$ must converge to a representation $\rho_\infty$. By the continuity of the volume with respect to the pointwise convergence, we get 

\[
\textup{Vol}(\rho_\infty)=\lim_{n \to \infty} \textup{Vol}(\rho_n)=\textup{Vol}(M).
\]

By the rigidity of volume function we know that $\rho_\infty$ must be conjugated to $i$, and the theorem is proved. 
\end{proof}

\section{Consequences and generalizations of Theorem~\ref{convergence}}

In this section we are going to prove Corollary~\ref{rigidity} and state a consequence
regarding the Morgan--Shalen compactification of $X(\Gamma,\textup{Isom}(\mathbb{H}^3))$. We
also discuss generalizations of Theorem~\ref{convergence} to higher dimensional cases.
We begin with the proof of Corollary~\ref{rigidity}. 

\begin{proof}[Proof of Corollary~\ref{rigidity}.]
If there did not exist such an $\varepsilon$, we should have $\textup{Vol}(\rho_n) \rightarrow \textup{Vol}(M)$, but this contraddicts Theorem~\ref{convergence}. Indeed the sequence $\rho_n$ should converge to a representation conjugated to the standard lattice embedding $i:\Gamma \rightarrow \textup{Isom}(\mathbb{H}^3)$ and it could not converge to an ideal point.
\end{proof}

The previous result has a clear consequence in the study of the volume function on the character variety $X(\Gamma)=X(\Gamma,\textup{Isom}(\mathbb{H}^3))$. Let $\overline{X(\Gamma)}^{MS}$ be the Morgan--Shalen compactification of the character variety $X$ (see~\cite{morgan2:articolo} for a definition). The previous corollary can be restated as follows

\begin{cor}
Let $\textup{Vol}:X(\Gamma) \rightarrow \R$ be the volume function. Let $\mathcal{N}(i)$ be a small neighborhood in $X(\Gamma)$ of the class containing the standard lattice embedding $i$ with respect to the topology of the pointwise convergence. Suppose that there exists a continuous extension $\overline{\textup{Vol}}:\overline{X(\Gamma)}^{MS} \rightarrow \R$. Hence we can bound uniformly the restriction

\[
\overline{\textup{Vol}}|_{\overline{X(\Gamma)}^{MS} \setminus \mathcal{N}(i)} < \textup{Vol}(M) - \varepsilon
\]
with a suitable value of $\varepsilon >0$. 
\end{cor}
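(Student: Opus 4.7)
The plan is to argue by contraposition using the compactness of the Morgan--Shalen compactification together with both the classical volume rigidity and the newly established Corollary~\ref{rigidity}. Concretely, I would first observe that $\overline{X(\Gamma)}^{MS}$ is compact (this is the very feature Morgan--Shalen's construction is designed to provide), and that $\mathcal{N}(i)$ is open, so the set $C:=\overline{X(\Gamma)}^{MS} \setminus \mathcal{N}(i)$ is a closed subset of a compact space and hence compact. Since we are assuming $\overline{\textup{Vol}}$ is continuous on the whole compactification, its restriction to $C$ is continuous on a compact set and therefore attains a maximum value $M_0$.

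The heart of the proof is then to show $M_0 < \textup{Vol}(M)$; once this is done, setting $\varepsilon := \textup{Vol}(M) - M_0 > 0$ gives the required uniform bound. Suppose for contradiction that $M_0 = \textup{Vol}(M)$, and let $p \in C$ be a point achieving this maximum. There are two mutually exclusive cases. If $p$ lies in the interior part $X(\Gamma) \subset \overline{X(\Gamma)}^{MS}$, then $p$ is the class of a genuine representation $\rho$ with $\textup{Vol}(\rho) = \textup{Vol}(M)$; by the classical volume rigidity recalled in the introduction, $\rho$ must be conjugate to the standard lattice embedding $i$, so $p = [i]$, which contradicts $p \notin \mathcal{N}(i)$.

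If instead $p$ is an ideal point, I would pick any sequence $[\rho_n] \in X(\Gamma)$ converging to $p$ in the Morgan--Shalen topology, which exists since $X(\Gamma)$ is dense in its compactification. By continuity of the extension, $\textup{Vol}(\rho_n) = \overline{\textup{Vol}}([\rho_n]) \to \overline{\textup{Vol}}(p) = \textup{Vol}(M)$. But this is precisely the situation ruled out by Corollary~\ref{rigidity}, which asserts that volumes of sequences converging to ideal points stay bounded away from $\textup{Vol}(M)$. This contradiction completes both cases.

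The main obstacle here is essentially bookkeeping rather than substantial mathematics: all the analytic content is already packaged into Corollary~\ref{rigidity} and the classical rigidity theorem. The only subtlety is ensuring that the point $p \in C$ falls into exactly one of the two cases, which is automatic from how the Morgan--Shalen compactification decomposes as $X(\Gamma)$ together with its ideal boundary, and ensuring $X(\Gamma)$ is dense in $\overline{X(\Gamma)}^{MS}$ so that the approximating sequence $[\rho_n] \to p$ in the second case actually exists, which is part of the definition of a compactification.
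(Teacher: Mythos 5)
Your argument is correct, and it is in fact more detailed than what the paper provides: the paper gives no separate proof of this corollary, presenting it simply as a restatement of Corollary~\ref{rigidity}, whose proof is itself a one-line application of Theorem~\ref{convergence}. Your route---compactness of $C=\overline{X(\Gamma)}^{MS}\setminus\mathcal{N}(i)$, attainment of the maximum $M_0$ of the continuous extension on $C$, and the two-case analysis (an interior maximiser is excluded by classical volume rigidity, an ideal maximiser by Corollary~\ref{rigidity} applied to an approximating sequence from $X(\Gamma)$)---is the natural formalization and has the advantage of producing the explicit constant $\varepsilon=\textup{Vol}(M)-M_0$. Two small caveats, both inherited from the informality of the statement rather than defects of your reasoning: first, for $C$ to be closed you need $\mathcal{N}(i)$ to be open for the topology of the compactification, while it is only given as a neighborhood of $[i]$ for the topology of pointwise convergence; this compatibility of the two topologies on $X(\Gamma)$ is implicitly assumed by the paper as well, and the interior case can in any event be handled without it by quoting Theorem~\ref{convergence} directly (a sequence in $X(\Gamma)\setminus\mathcal{N}(i)$ with volumes tending to $\textup{Vol}(M)$ would converge to $[i]$ in the pointwise topology, a contradiction). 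Second, extracting a sequence of representation classes converging to the ideal maximiser uses density of $X(\Gamma)$ in $\overline{X(\Gamma)}^{MS}$ together with first countability; both hold, since the Morgan--Shalen compactification is by construction the closure of the image of $X(\Gamma)$ in a metrizable space of projectivized length functions over the countable group $\Gamma$, but it is worth stating.
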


In particular, the previous corollary proves~\cite[Conjecture 1]{guilloux:articolo} and
hence~\cite[Theorem 1.2]{guilloux:articolo} for representations into $PSL(2,\C)$.

\bigskip

Now we prove now a generalization of Theorem~\ref{convergence} when $M$ is a $k$-manifold and
$\rho_n$ takes values in $\textup{Isom}(\mathbb H^k)$ (for $k>3$).

More precisely, let $\Gamma$ be the fundamental group of a complete hyperbolic $k$-dimensional
manifold $M$ with finite volume. We show that, given a sequence of representations
$\rho_n:\Gamma \rightarrow \textup{Isom}(\mathbb{H}^k)$ such that $\lim_{n \to \infty}
\textup{Vol}(\rho_n)=\textup{Vol}(M)$, it is possible to find a sequence of elements $g_n \in
\textup{Isom}(\mathbb{H}^k)$ such that the sequence $g_n \circ \rho_n \circ g_n^{-1}$ converges
to the standard lattice embedding $i:\Gamma \rightarrow \textup{Isom}(\mathbb{H}^k)$. The key
point of the proof in the case $k=3$ is given by Proposition~\ref{maximum}, which is still valid in dimension bigger or equal than $4$. Indeed, following what we have done before, consider the space

\[
Sym^+_1(k,\R):=\{ H \in Sym(k,\R)| H>0, \textup{Tr}(H)=1\}
\]
of real symmetric matrices of order $k$ with trace equal to $1$ which are positive definite. The function 

\[
\psi:Sym^+_1(k,\R) \rightarrow \R, \hspace{10pt} \psi(H):=\frac{\det(H)}{(\det(H-I))^2}
\]
induces a function $\tilde \psi$ on the quotient $O(k,\R) \backslash Sym^+_1(k,\R)$, where the orthogonal group acts by conjugation. As in the case of $k=3$, denote by $\mathring \DD_{k-1}$ the interior of the standard $(k-1)$-simplex and consider the action of $\mathfrak{S}_k$ by permutation of coordinates. We can read the function $\psi$ on the space $\mathfrak{S}_k \backslash \mathring \DD_{k-1}$ by considering

\[
\Psi(a_1,\ldots,a_k):=\prod_{i=1}^k \frac{a_i}{(1-a_i)^2}.
\]

Indeed the space $O(k,\R) \backslash Sym^+_1(k,\R)$ is homeomorphic to the space $\mathfrak{S}_k \backslash \mathring \DD_{k-1}$ and the homeomorphism is realized by sending the class of a symmetric matrix $H$ to the non-ordered $n$-tuple of its eigenvalues. We are now interest in extending the function $\Psi$ to the space $\mathfrak{S}_k \backslash \DD_{k-1}$ and to do this we are going to consider the function as defined on $\mathring \DD_{k-1}$. Moreover, since $\sum_{i=1}^k a_i=1$, with an abuse of notation, we are going to rewrite $\Psi$ as

\[
\Psi(a_1,\ldots,a_{k-1})=\frac{a_1 \ldots a_{k-1}(1-\sum_{i=1}^{k-1}a_i)}{(1-a_1)^2\ldots(1-a_{k-1})^2(\sum_{i=1}^{k-1}a_i)^2}.
\]

On every point of the boundary $\partial \DD_{k-1}$ which is not a vertex, the function clearly extends with zero. The same holds for the vertex $(1,0,\ldots,0)$ corresponding to the $(k-1)$-tuple $(0,0,\ldots,0)$. Indeed, near $(0,0,\ldots,0)$ we have

\[
\Psi(a_1,\ldots,a_{k-1}) \sim \frac{a_1\ldots a_{k-1}}{(\sum_{i=1}^{k-1} a_i)^2} \leq \frac{(\sum_{i=1}^{k-1} a_i)^{k-3}}{(k-1)^{k-1}}
\]
where the symbol $\sim$ denotes that $\Psi$ has the same behaviour of the expression on the right. For $k \geq 4$ the right-hand side is a function which converges to zero as $(a_1,\ldots,a_{k-1}) \to (0,\ldots,0)$. Moreover, since the function $\Psi$ is invariant under the action of $\mathfrak{S}_k$ on $\mathring \DD_{k-1}$ we have that its continuous extension must satisfy

\[
\Psi(1,0,\ldots,0)=\Psi(0,1,\ldots,0)=\Psi(0,0,\ldots,1)
\]
and so the function can be extended to zero at any vertex. In particular given a sequence of matrices $H_n$ such that $\lim_{n \to \infty} \psi(H_n)=(k/(k-1)^2)^k$ we have that the sequence $H_n$ must converge to $I/k$, where $I$ is the identity matrix of order $k$. From the previous considerations and following the same strategy of the case $k=3$, it is straightforward to prove

\begin{teor}\label{convk=m}
Let $\Gamma$ be the fundamental group of a complete hyperbolic $k$-dimensional non-compact manifold of finite volume. Let $\rho_n:\Gamma \rightarrow \textup{Isom}(\mathbb{H}^k)$ be a sequence of representations such that $\lim_{n \to \infty} \textup{Vol}(\rho_n)=\textup{Vol}(M)$. It is possible to find a sequence of elements $g_n \in \textup{Isom}(\mathbb{H}^k)$ such that the sequence $g_n \circ \rho_n \circ g_n^{-1}$ converges to the standard lattice embedding $i:\Gamma \rightarrow \textup{Isom}(\mathbb{H}^k)$.
\end{teor}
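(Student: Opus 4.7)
The plan is to repeat the proof of Theorem~\ref{convergence} in dimension $k$, using the $k$-dimensional analogue of Proposition~\ref{maximum} (the extension of $\Psi$ to the boundary of $\DD_{k-1}$) which has just been established in the preceding paragraphs.

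Concretely, I would first discard elementary representations, as they have zero volume and so cannot lie in a maximizing subsequence. For each remaining $\rho_n$ the BCG natural map $F_n:\mathbb H^k\to\mathbb H^k$ is well defined, and after conjugating $\rho_n$ by an isometry $g_n$ with $g_n(F_n(O))=O$ I may assume $F_n(O)=O$. Using the $\varepsilon$-natural maps exactly as in Lemma~\ref{almost_everywhere}, the hypothesis $\textup{Vol}(\rho_n)\to\textup{Vol}(M)$ forces $|\det D_xF_n|\to 1$ almost everywhere on $\mathbb H^k$. The $k$-dimensional BCG Jacobian estimate, combined with the identity $K_n=I-H_n$ (which holds because $\mathbb H^k$ has constant sectional curvature $-1$), then gives $\psi(H_n)\to\bigl(k/(k-1)^2\bigr)^k$ a.e.; together with the extension of $\Psi$ to $\partial\DD_{k-1}$ just established, this forces $H_n\to I/k$ almost everywhere.

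The remainder is essentially identical to the $k=3$ case. Egorov's theorem together with the $k$-dimensional versions of Lemmas~\ref{kappa1} and~\ref{kappa2} (the BCG Lipschitz control on $d(F_n(x),F_n(x'))$ and on the variation of $H_n$ along geodesics, both valid in every dimension) upgrades the a.e. convergence to uniform convergence of $H_n$ to $I/k$ on every closed ball $\overline{B_r(O)}$. Feeding this into the Cauchy--Schwarz argument at the end of Section~3 yields a uniform operator-norm bound of the form $\|D_xF_n\|\leq\sqrt k+O(\varepsilon)$ on $\overline{B_r(O)}$ for all sufficiently large $n$, whence $d(F_n(x),F_n(\gamma x))\leq C\,d(x,\gamma x)$ for every fixed $x\in\mathbb H^k$ and $\gamma\in\Gamma$. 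This bounds the translation lengths $\mathfrak L_{\mathbb H^k}(\rho_n(\gamma))$ uniformly in $n$, so $[\rho_n]$ stays in a compact part of the character variety; combined with the normalization $F_n(O)=O$, which prevents $\rho_n$ from escaping to infinity inside its conjugacy class, a subsequence converges pointwise to some $\rho_\infty$. Continuity of the volume gives $\textup{Vol}(\rho_\infty)=\textup{Vol}(M)$, and the classical volume rigidity theorem then identifies $\rho_\infty$ with a conjugate of the standard lattice embedding $i$.

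The main potential obstacle is that every step above is dimension-sensitive: the sharpness of the BCG Jacobian estimate, the inequality $\psi\leq(k/(k-1)^2)^k$ with equality exactly at $I/k$, and the boundary behaviour of $\Psi$ on $\partial\DD_{k-1}$ all rely on $k\geq 3$ (cf.\ Remark~\ref{rem:BCGAppB}). However, the boundary analysis of $\Psi$ has already been performed in the discussion preceding the theorem, and is in fact cleaner when $k\geq 4$, where $\Psi$ extends by zero at every vertex. Consequently no genuinely new difficulty appears, and the adaptation of the $k=3$ proof is indeed, as announced, "straightforward".
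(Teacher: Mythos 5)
Your proposal is correct and follows exactly the route the paper takes: the text preceding Theorem~\ref{convk=m} establishes the $k$-dimensional analogue of Proposition~\ref{maximum} via the boundary analysis of $\Psi$ on $\DD_{k-1}$, and then declares the rest a straightforward repetition of the $k=3$ argument (elementary representations discarded, normalization $F_n(O)=O$, $\varepsilon$-natural maps, $K_n=I-H_n$, Egorov plus the Lipschitz lemmas, the Cauchy--Schwarz bound on $\|D_xF_n\|$, translation-length bounds, and volume rigidity for the limit), which is precisely what you spell out.
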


From which we deduce

\begin{cor}
Suppose $\rho_n:\Gamma  \rightarrow \textup{Isom}(\mathbb{H}^k)$ is a sequence of representations converging to any ideal point of the Morgan--Shalen compactification of $X(\Gamma,\textup{Isom}(\mathbb{H}^k))$. Then the sequence of volumes $\textup{Vol}(\rho_n)$ must be bounded from above by $\textup{Vol}(M)-\varepsilon$ with $\varepsilon>0$. 
\end{cor}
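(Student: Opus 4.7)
The plan is to mimic verbatim the proof of Corollary~\ref{rigidity}, replacing the appeal to Theorem~\ref{convergence} by an appeal to Theorem~\ref{convk=m}. The argument is a contradiction: if no uniform gap $\varepsilon > 0$ existed, then there would be a subsequence (still denoted $\rho_n$) with $\textup{Vol}(\rho_n) \to \textup{Vol}(M)$. By Theorem~\ref{convk=m} one can find elements $g_n \in \textup{Isom}(\mathbb{H}^k)$ such that $g_n \circ \rho_n \circ g_n^{-1}$ converges pointwise to the standard lattice embedding $i$.

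Since the volume and the class in the character variety are both conjugation-invariant, this produces a subsequence of classes $[\rho_n] \in X(\Gamma,\textup{Isom}(\mathbb{H}^k))$ converging to $[i]$ in the topology of pointwise convergence, hence in the interior of the Morgan--Shalen compactification $\overline{X(\Gamma)}^{MS}$. On the other hand the original sequence was assumed to converge to an ideal point, i.e.\ to a point of $\overline{X(\Gamma)}^{MS} \setminus X(\Gamma)$. Since $\overline{X(\Gamma)}^{MS}$ is Hausdorff and $[i]$ lies in $X(\Gamma)$ (not among the ideal points), the two limits cannot coincide, producing the desired contradiction.

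The only real step that requires any care is invoking Theorem~\ref{convk=m} on the extracted subsequence: one must note that the $g_n$'s conjugating $\rho_n$ to something converging to $i$ do not change the class $[\rho_n]$ in the character variety, so convergence of the conjugates gives convergence of the underlying classes to $[i]$. Everything else is a direct transcription of the proof of Corollary~\ref{rigidity}. I do not expect any genuine obstacle; the statement is a formal corollary of Theorem~\ref{convk=m} together with the definition of the Morgan--Shalen compactification.
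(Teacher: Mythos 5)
Your proposal is correct and takes essentially the same route as the paper: the corollary is deduced from Theorem~\ref{convk=m} exactly as Corollary~\ref{rigidity} is deduced from Theorem~\ref{convergence}, by assuming no uniform gap exists, extracting a subsequence with $\textup{Vol}(\rho_n)\to\textup{Vol}(M)$, and contradicting convergence to an ideal point. The extra care you take about conjugation invariance of the classes and uniqueness of limits in $\overline{X(\Gamma)}^{MS}$ is left implicit in the paper's one-line argument, but it is the same proof.
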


\begin{oss}
As mentioned in the introduction, it is worth noticing that if a sequence of representations $\rho_n:\Gamma \rightarrow \textup{Isom}(\mathbb{H}^k)$ satisfies $\lim_{n \to \infty} \textup{Vol}(\rho_n)=\textup{Vol}(M)$, with $k \geq 4$, then Theorem~\ref{convk=m} and~\cite[Theorem 2.3]{garland:articolo} imply that it must be eventually constant in the character variety $X(\Gamma,\textup{Isom}(\mathbb{H}^k))$. 
\end{oss}

\bigskip

We conclude by discussing the generalization of Theorem~\ref{convergence} to the case where $M$ is a
$k$-manifold  and $\rho_n$ takes values in $\textup{Isom}(\mathbb H^m)$ (with $m>k\geq 3$).

More precisely, let $\rho_n:\Gamma \rightarrow \textup{Isom}(\mathbb{H}^m)$ be
a sequence of representations such that $\lim_{n \to \infty}
\textup{Vol}(\rho_n)=\textup{Vol}(M)$. We show that there exists a sequence $g_n \in
\textup{Isom}(\mathbb{H}^m)$ such that the sequence $g_n \circ \rho_n \circ g_n^{-1}$ converges
to a representation $\rho_\infty$ which preserves a totally geodesic copy of $\mathbb{H}^k$ and
whose $\mathbb{H}^k$-component is conjugated to the standard lattice embedding $i:\Gamma
\rightarrow \textup{Isom}(\mathbb{H}^k) < \textup{Isom}(\mathbb{H}^m)$.

The proof in this general case follows the line of the case $k=m$ but it needs some additional
care. We do not rewrite the whole proof but we only concentrate on the subtleties which differ 
from the previous case. 

Let $F_n:\mathbb{H}^k \rightarrow \mathbb{H}^m$ be the natural map associated to the
representation $\rho_n$. We are going to follow~\cite{besson99:articolo} for the
notation. Recall that $B_M$ denotes the Busemann function relative to the hyperbolic space of
dimension $m$ centered at the origin $O$. Similarly to what we have done before, for any $n \in
\N$ and every $x \in \mathbb{H}^k$ we (implicitly) define self-adjoint operators $K_n,H_n$ on
$T_{F_n(x)}\mathbb{H}^m$ by:

\[
\langle K_n|_{F_n(x)} u, u \rangle= \int_{\partial_\infty \mathbb{H}^m} \nabla dB_M|_{(F_n(x),D_n(\theta))}(u,u)d\mu_x(\theta)
\]

\[
\langle H_n|_{F_n(x)} u, u \rangle= \int_{\partial_\infty \mathbb{H}^m} (dB_M|_{(F_n(x),D_n(\theta))}(u))^2d\mu_x(\theta)
\]
for any $u \in T_{F_n(x)}\mathbb{H}^m$. Since the dimension $m$ is bigger than $k$, we will
need to define another operator $H'$, this time on $T_x\mathbb{H}^k$. For any $v \in T_x\mathbb{H}^k$, we set

\[
\langle H'_n|_x v, v \rangle= \int_{\partial_\infty \mathbb{H}^k} (dB_K|_{(x,\theta)}(v))^2d\mu_x(\theta).
\]

For simplicity, we are going to drop the subscript which refers to the tangent space on which operators are defined. As a consequence of the Cauchy--Schwarz inequality we get

\[
\langle K_n \circ D_xF_n(v),u \rangle \leq (k-1) (\langle H_n (u),u \rangle)^{\frac{1}{2}} (\langle H'_n (v),v \rangle)^{\frac{1}{2}}
\]
for every $v \in T_x\mathbb{H}^k$ and every $u \in T_{F_n(x)}\mathbb{H}^m$.\\
\indent By applying the same strategy of the proof of Lemma~\ref{almost_everywhere}, we get that the condition $\lim_{n \to \infty}\textup{Vol}(\rho_n)=\textup{Vol}(M)$ implies that the $k$-Jacobian $Jac_k(F_n)$ of the natural maps $F_n$ converges to $1$ almost everywhere with respect the measure induced by the standard hyperbolic metric on $\mathbb{H}^k$. Since 

\[
Jac_k(F_n)(x):=\max_{u_1,\ldots,u_k}||D_xF_n(u_1) \wedge \ldots D_xF_n(u_k)||_{g_{\mathbb{H}^m}}, 
\]
let $\{ u_1,\ldots,u_k \}$ be the frame which realizes the maximum and denote by $U_x$ the subspace $U_x:=\textup{span}_\R\{u_1,\ldots,u_k\}$ of $T_x\mathbb{H}^k$ (in fact the subspace $U_x$ coincides with $T_x\mathbb{H}^k$, but we prefer to mantain the same notation of~\cite{besson99:articolo}). Set $V_{F_n(x)}:=D_xF_n(U_x)$. We denote by $K_n(x)^V$, $H_n(x)^V$ and $H'_n(x)^U$ the restrictions of the form $K_n|_{F_n(x)}$, $H_n|_{F_n(x)}$ and $H_n'|_x$ to the  subspace $V_{F_n(x)}$, $V_{F_n(x)}$ and $U_x$, respectively. As consequence of the Cauchy--Schwarz inequality, as in~\cite[Section 2]{besson99:articolo} it results

\begin{eqnarray*}
&~&\det(K_n(x)^V)Jac_k(F_n)(x) \\&\leq&  (k-1)^k (\det (H^V_n(x)))^\frac{1}{2} (\det (H'^U_n(x)))^\frac{1}{2} \\&\leq&  k^{-\frac{k}{2}}(k-1)^k(\det (H^V_n(x)))^\frac{1}{2} 
\end{eqnarray*}

and since $K^V_n(x)=I - H^V_n(x)$ we get the estimate

\[
Jac_k F_n(x) \leq \frac{(k-1)^k}{k^\frac{k}{2}} \frac{(\det(H^V_n(x)))^\frac{1}{2}}{\det(I - H^V_n(x))}.
\]

In this way we can apply the same strategy followed for the case $k=m$ and hence it is straightforward to prove

\begin{teor}\label{convk>3}
Let $\Gamma$ be the fundamental group of a complete hyperbolic $k$-dimensional non-compact
manifold of finite volume, with $k \geq 3$. Consider an integer $m \geq k$. Given a sequence of representations
$\rho_n:\Gamma \rightarrow \textup{Isom}(\mathbb{H}^m)$ such that $\lim_{n \to \infty}
\textup{Vol}(\rho_n)=\textup{Vol}(M)$, there exists a sequence of elements $g_n \in
\textup{Isom}(\mathbb{H}^m)$ such that the sequence $g_n \circ \rho_n \circ g_n^{-1}$ converges
to a representation $\rho_\infty$ which preserves a totally geodesic copy of $\mathbb
H^k$ in $\mathbb H^m$,  and whose $\mathbb H^k$-component is conjugated to the standard lattice
embedding $i:\Gamma \rightarrow \textup{Isom}(\mathbb{H}^k)<\textup{Isom}(\mathbb H^m)$.
\end{teor}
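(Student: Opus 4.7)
The plan is to mimic the strategy of Theorem~\ref{convk=m} using the ingredients already assembled above: the $\varepsilon$-natural maps, the self-adjoint operators $K_n, H_n$ on $T_{F_n(x)}\mathbb{H}^m$ and $H'_n$ on $T_x\mathbb{H}^k$, the Cauchy--Schwarz estimate
\[
Jac_k F_n(x) \leq \frac{(k-1)^k}{k^{k/2}} \cdot \frac{(\det H^V_n(x))^{1/2}}{\det(I - H^V_n(x))},
\]
and the extension of Proposition~\ref{maximum} to the function $\Psi$ on $\mathfrak{S}_k \backslash \DD_{k-1}$ for $k \geq 4$ already carried out in the proof of Theorem~\ref{convk=m}.

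First, I would repeat the proof of Lemma~\ref{almost_everywhere} almost verbatim: since $F_n^\varepsilon$ is properly ending and $Jac_k(F_n^\varepsilon) \leq 1 + \varepsilon$, the definition of $\textup{Vol}(\rho_n)$ gives $\textup{Vol}(\rho_n) \leq \int_M Jac_k(F_n^\varepsilon)\, d\textup{vol}_{\mathbb{H}^k}$; letting $\varepsilon \to 0$ and invoking dominated convergence together with the $C^1$-convergence $F_n^\varepsilon \to F_n$ yields $\textup{Vol}(\rho_n) \leq \int_M Jac_k(F_n)\, d\textup{vol}_{\mathbb{H}^k} \leq \textup{Vol}(M)$, so the hypothesis forces $Jac_k(F_n)(x) \to 1$ almost everywhere in $\mathbb{H}^k$. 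Inserting this into the displayed estimate forces
\[
\psi(H^V_n(x)) := \frac{\det H^V_n(x)}{(\det(I - H^V_n(x)))^2} \longrightarrow \left(\frac{k}{(k-1)^2}\right)^k
\]
almost everywhere, and by the extended Proposition~\ref{maximum} this is possible only if $H^V_n(x) \to I/k$ on $V_{F_n(x)}$. By the sharpness statement of~\cite[Theorem 1.10]{besson99:articolo}, this implies that $D_xF_n$ is asymptotically an isometric embedding of $T_x\mathbb{H}^k$ onto the $k$-plane $V_{F_n(x)} \subset T_{F_n(x)}\mathbb{H}^m$.

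Next, I would upgrade the a.e.\ convergence $H^V_n \to I/k$ to uniform convergence on compact subsets of $\mathbb{H}^k$ by repeating the Egorov-plus-Lipschitz bootstrap of the previous section; Lemmas~\ref{kappa1} and~\ref{kappa2} transfer to the present setting (with $F_n(x) \in \mathbb{H}^m$ rather than $\mathbb{H}^k$) with essentially the same proofs. This produces a uniform bound $\|D_xF_n\| \leq C$ on any closed ball $\overline{B_r(O)} \subset \mathbb{H}^k$ for $n$ large. The resulting estimate $d(F_n(x), F_n(\gamma x)) \leq C\, d(x, \gamma x)$ for every $\gamma \in \Gamma$, combined with the normalization $F_n(O) = O$ (achieved by conjugating $\rho_n$ by suitable $g_n \in \textup{Isom}(\mathbb{H}^m)$), bounds the translation lengths $\mathfrak{L}_{\mathbb{H}^m}(\rho_n(\gamma))$ in $n$, so up to extracting a subsequence, $\rho_n$ converges pointwise to some $\rho_\infty:\Gamma \to \textup{Isom}(\mathbb{H}^m)$. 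Continuity of the volume with respect to pointwise convergence then gives $\textup{Vol}(\rho_\infty) = \textup{Vol}(M)$.

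To conclude, one invokes the volume rigidity theorem in higher codimension from~\cite{franc06:articolo}: a representation $\rho_\infty:\Gamma \to \textup{Isom}(\mathbb{H}^m)$ achieving the maximum $\textup{Vol}(M)$ necessarily preserves a totally geodesic copy of $\mathbb{H}^k$ in $\mathbb{H}^m$ and, restricted to it, is conjugate to the standard lattice embedding. The most delicate point of the whole argument is precisely the role of the moving subspaces $V_{F_n(x)}$: since $V$ depends on both $n$ and $x$, one must verify that after passage to the limit the infinitesimal isometric images coalesce into a single totally geodesic $\mathbb{H}^k \subset \mathbb{H}^m$, and it is exactly this geometric coherence that the higher-dimensional volume rigidity provides. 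With that input, the rest of the argument is essentially a carbon copy of the proofs of Theorem~\ref{convergence} and Theorem~\ref{convk=m}.
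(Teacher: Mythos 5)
Your proposal is correct and follows essentially the same route as the paper: the modified Cauchy--Schwarz/Jacobian estimate with the operators $H_n$, $K_n$, $H'_n$ and the restrictions to $V_{F_n(x)}$, the extension of Proposition~\ref{maximum} to $k\geq 4$, the Egorov-type bootstrap with Lemmas~\ref{kappa1} and~\ref{kappa2}, the normalization $F_n(O)=O$ giving pointwise convergence to some $\rho_\infty$ with $\textup{Vol}(\rho_\infty)=\textup{Vol}(M)$, and finally the higher-codimension volume rigidity of~\cite{franc06:articolo} to identify $\rho_\infty$. This matches the paper's argument (which is itself only sketched at this point), including the observation that the coherence of the moving subspaces $V_{F_n(x)}$ is ultimately supplied by the rigidity theorem applied to $\rho_\infty$ rather than by a direct limiting argument.
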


From which we deduce

\begin{cor}
Suppose $\rho_n:\Gamma  \rightarrow \textup{Isom}(\mathbb{H}^m)$ is a sequence of representations converging to any ideal point of the Morgan--Shalen compactification of $X(\Gamma,\textup{Isom}(\mathbb{H}^m))$. If $k \leq m$ the sequence of volumes $\textup{Vol}(\rho_n)$ must be bounded from above by $\textup{Vol}(M)-\varepsilon$ with $\varepsilon>0$. 
\end{cor}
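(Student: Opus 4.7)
The plan is to argue by contradiction, adapting verbatim the proof of Corollary~\ref{rigidity} but invoking Theorem~\ref{convk>3} in place of Theorem~\ref{convergence}. First, I would suppose that no such $\varepsilon>0$ exists. Then, after passing to a subsequence, one has $\textup{Vol}(\rho_n) \to \textup{Vol}(M)$, which puts us exactly in the hypothesis of Theorem~\ref{convk>3}.

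The next step is to apply Theorem~\ref{convk>3} to this subsequence: there exists a sequence $g_n \in \textup{Isom}(\mathbb{H}^m)$ such that $g_n \circ \rho_n \circ g_n^{-1}$ converges pointwise to a representation $\rho_\infty:\Gamma \rightarrow \textup{Isom}(\mathbb{H}^m)$ preserving a totally geodesic copy of $\mathbb{H}^k$ in $\mathbb{H}^m$, with $\mathbb{H}^k$-component conjugated to the standard lattice embedding $i:\Gamma \rightarrow \textup{Isom}(\mathbb{H}^k) < \textup{Isom}(\mathbb{H}^m)$. In particular $\rho_\infty$ is a genuine (non-ideal) representation, so its class $[\rho_\infty]$ lies in the finite part $X(\Gamma,\textup{Isom}(\mathbb{H}^m))$ of the Morgan--Shalen compactification.

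Since the class of a representation in the character variety is invariant under conjugation, one has $[\rho_n]=[g_n \circ \rho_n \circ g_n^{-1}]$, and therefore the corresponding sequence of characters converges to $[\rho_\infty]$ inside $X(\Gamma,\textup{Isom}(\mathbb{H}^m))$. This directly contradicts the hypothesis that $[\rho_n]$ converges to an ideal point of $\overline{X(\Gamma,\textup{Isom}(\mathbb{H}^m))}^{MS}$, since ideal points and points of the finite part are disjoint in the Morgan--Shalen compactification. Hence the assumed $\varepsilon$ must exist.

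The main potential obstacle is conceptual rather than computational: one must check that the inner conjugation used to produce $\rho_\infty$ does not alter the point in the character variety, together with the standard fact that pointwise convergence of representations descends to convergence of characters inside $X(\Gamma,\textup{Isom}(\mathbb{H}^m))$. Both are standard features of the Morgan--Shalen construction, so no new BCG-style estimate beyond Theorem~\ref{convk>3} is required.
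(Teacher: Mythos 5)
Your argument is correct and is essentially the paper's own (the paper leaves this corollary's proof implicit, but it is the same contradiction argument used for Corollary~\ref{rigidity}, with Theorem~\ref{convk>3} replacing Theorem~\ref{convergence}): extract a subsequence with $\textup{Vol}(\rho_n)\to\textup{Vol}(M)$, conjugate to get convergence to a genuine representation $\rho_\infty$, and note that conjugation-invariance of characters makes $[\rho_n]$ converge in the finite part, contradicting convergence to an ideal point.
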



\vspace{20pt}
Stefano Francaviglia\\
Department of Mathematics,\\
University of Bologna,\\
Piazza di Porta San Donato 5,\\
40126 Bologna,\\
Italy\\
\texttt{stefano.francaviglia@unibo.it}\\
\\
Alessio Savini\\
Department of Mathematics,\\
University of Bologna,\\
Piazza di Porta San Donato 5,\\
40126 Bologna,\\
Italy\\
\texttt{alessio.savini5@unibo.it}\\

\end{document}